\numberwithin{equation}{section}
\renewcommand{\epsilon}{\varepsilon}
\renewcommand{\Re}{{\ensuremath{\mathrm{Re\,}}}}
\DeclareSymbolFont{SY}{U}{psy}{m}{n}
\DeclareMathSymbol{\emptyset}{\mathord}{SY}{'306}
\DeclareMathOperator{\Ran}{Ran} \DeclareMathOperator{\Ker}{Ker}
 \DeclareMathOperator{\Dom}{Dom}
\DeclareMathOperator{\sign}{sign} \DeclareMathOperator{\grad}{grad}
\DeclareMathOperator{\dom}{Dom} \DeclareMathOperator{\ran}{Ran}
\renewcommand{\div}{\mathrm{div}\,}
\DeclareMathOperator{\spec}{spec} 
\DeclareMathOperator{\specess}{spec_{ess}}
\DeclareMathSymbol{\newtimes}{\mathbin}{SY}{'264}
\newcommand{\dist}{\mathrm{dist}}
\newcommand{\R}{\mathbb{R}}
\newcommand{\C}{\mathbb{C}}
\newcommand{\Z}{\mathbb{Z}}
\newcommand{\EE}{\mathsf{E}}
\newcommand{\fK}{\mathfrak{K}}
\newcommand{\fa}{\mathfrak{a}}
\newcommand{\fb}{\mathfrak{b}}
\newcommand{\ft}{\mathfrak{t}}
\newcommand{\fs}{\mathfrak{s}}
\newcommand{\fv}{\mathfrak{v}}
\newcommand{\cB}{{\mathcal B}}
\newcommand{\cH}{{\mathcal H}}
\newcommand{\cK}{{\mathcal K}}
\newcommand{\cL}{{\mathcal L}}
\newcommand{\cO}{{\mathcal O}}
\newcommand{\Rey}{\mathrm{Re}}
\newcommand{\St}{\mathrm{St}_*}
\newcommand{\vv}{{v_{\ast}}}
\newcommand{\rlad}{\mathrm{Re}^*}
\newcommand{\rr}{\Re_\ast}
\newtheorem{theorem}{Theorem}[section]{\bf}{\it}
\newtheorem{proposition}[theorem]{Proposition}{\bf}{\it}
{\bf}{\it}
{\it}{\rm}
\newtheorem{lemma}[theorem]{Lemma}{\bf}{\it}
\newtheorem{remark}[theorem]{Remark}{\it}{\rm}
{\bf}{\it}
{\bf}{\it}
{\bf}{\it}
{\bf}{\it}
{\bf}{\it}
\title[ The Tan $2 \Theta$-theorem in fluid dynamics]{ The Tan $2 \Theta$-theorem in fluid dynamics}
\author[L. Grubi\v{s}i\'c]{Luka Grubi\v{s}i\'c}
\address{L.~Grubi\v{s}i\'c,
Department of Mathematics, University of Zagreb, Bijeni\v{c}ka 30,
10000 Zagreb, Croatia}
\email{luka.grubisic@math.hr}
\author[V. Kostrykin]{Vadim Kostrykin}
\address{V.~Kostrykin, FB 08 - Institut f\"{u}r Mathematik,
Johannes Gutenberg-Universit\"{a}t Mainz,
Staudinger Weg 9,
D-55099 Mainz,
Germany}
\email{kostrykin@mathematik.uni-mainz.de}
\author[K. A. Makarov]{Konstantin A.~Makarov}
\address{K.~A.~Makarov, Department of Mathematics, University of
Missouri, Co\-lum\-bia, MO 65211, USA}
\email{makarovk@missouri.edu}
\author[S.~Schmitz]{Stephan Schmitz}
\address{S.~Schmitz, Department of Mathematics, University of
Missouri, Co\-lum\-bia, MO 65211, USA}
\email{schmitzst@missouri.edu}
\author[K. Veseli\'c]{Kre\v{s}imir Veseli\'c}
\address{K.~Veseli\'c,
Fakult\"{a}t f\"{u}r Mathematik und Informatik, Fernuniversit\"{a}t Hagen, Postfach 940,
D-58084 Hagen, Germany} \email{kresimir.veselic@fernuni-hagen.de}
\subjclass[2010]{Primary 35Q35, 47A67 ; Secondary 35Q30,  47A12}
\dedicatory{Dedicated with great pleasure to Eduard Tsekanovskii  at the  occasion  of his 80th birthday}
\keywords{Navier-Stokes equation, Stokes operator, Reynolds number, rotation of subspaces, quadratic forms, quadratic numerical range}
\begin{document}

\begin{abstract}
We show that  the generalized Reynolds number (in fluid dynamics) introduced by Ladyzhenskaya is closely related to the  rotation  of the positive spectral subspace
of the  Stokes  block-operator in the underlying Hilbert space. We also explicitly  evaluate  the bottom of the negative  spectrum of the Stokes 
operator and prove a sharp  inequality relating the distance from the bottom of its spectrum to the origin and the length of the first positive gap.

\end{abstract}

\maketitle

\section{Introduction}

It is  generally believed  that a  steady flow  of an incompressible fluid  is stable whenever  the Reynolds number associated  with the flow is sufficiently low, while it is experimentally proven that flows become turbulent for  high Reynolds numbers (about  several hundreds   and beyond).

 Historically, the first  rigorous   quantitative  stability  result  for stationary solutions to the $2D$-Navier-Stokes equation (in  bounded domains) is due to  Ladyzhenskaya
\cite{Lad}.  Her analysis  shows that  given  a stationary solution  $v_{\text{st}}$, 
  any other solution $v$ (with smooth initial data and the same forcing) approaches   $v_{\text{st}}$ exponentially fast
 \begin{equation}\label{conv}
  v-v_{\text{st}}=\cO(e^{-\alpha t}), \quad t\to\infty,
\end{equation}
whenever the generalized Reynolds number
  $$
  \rlad=\frac{2\vv}{\nu \sqrt{\lambda_1(\Omega)}}
  $$
  is  less than one.
  Here  $\nu $ is the viscosity of the incompressible fluid,   $\lambda_1(\Omega)$ is the principal eigenvalue of the Dirichlet Laplacian in the bounded  domain $\Omega$, and 
  $\vv$ stands for the characteristic velocity of the stationary flow $v_{\text{st}}$ (see \eqref{vstar}).
	
In fact,  the rate of convergence \(\alpha\) in \eqref{conv} is  given by  \cite{Lad}, $$\alpha=\nu \lambda_1(\Omega) (1-\rlad ).$$

To better understand the functional-analytic as well as (Hilbert space) geometric aspects of the Navier-Stokes stability in any dimension, we introduce and study the  (model)  Stokes block operator,  which is  the Friedrichs extension of   the block  operator matrix  
   \begin{equation}\label{StokesMatrixint} 
S=\begin{pmatrix}-\nu{\bf\Delta}&v_*\grad\\ -v_*\div &0\end{pmatrix} 
\end{equation}  
 initially defined on the set $C_0^\infty(\Omega)^n\oplus  C^\infty(\Omega)$  of  infinitely differentiable vector-valued functions  in the Hilbert space $\cH=L^2(\Omega)^n\oplus L^2(\Omega)$, $n\ge 2$ (cf.~ \cite{Atkinson, FFMM}).

One of the principal results of the current  paper links the Ladyzhenskaya-Reynolds number $\rlad$ to 
the norm of the operator angle $\Theta$  between the positive subspace   of the Stokes operator and  the positive subspace of its  diagonal part (see \cite{Davis:Kahan,KMM:2,Seelmann}  for the concept of an operator angle).
That is, the following  {\sc Tan $2 \Theta$ Theorem in  fluid dynamics},  
\begin{equation*}\label{Tan2Thetaintro}
\tan 2\|\Theta\|\leq  \rlad,
\end{equation*}
holds (see Theorem \ref{osn3}). 

The essence of this estimate is the remarkable fact that {\it the magnitude of the Reynolds number limits the rotation of the spectral subspaces of the block Stokes operator.}

We also show that   the lowest positive eigenvalue $\lambda_1(S)$ of  the Stokes operator  $S$ and the bottom of its negative (essential) spectrum
satisfy the   inequality
$$|\inf \spec (S)|\le \frac14 \left [ \rlad\right ]^2 \lambda_1(S),
$$
which is  asymptotically sharp as $\nu \to \infty$ or $\vv \to 0$.

 In particular,  the Ladyzhenskaya (2D-) stability hypothesis $\rlad<1$  yields the following \\
 {\sc Stability Laws }:
 \begin{itemize}
 \item[$\bullet$] 
 the relative spectral shift $\delta$ defined as ratio of the shift of the spectrum from the origin to the left to the length of the spectral gap of the Stokes operator
is bounded by  
 $$
\delta =\frac{|\inf \spec (S)|}{ \lambda_1(S)}<\frac{1}{4};$$

 \item[$\bullet$]
  the maximal rotation  angle $\|\Theta\|$  
 between the positive subspaces of the perturbed and unperturbed Stokes operators  is bounded by
\begin{equation*}
\label{soso}
  \|\Theta\|<\frac\pi 8;
  \end{equation*}
\item[$\bullet$]
 the Friedrichs extension of the block operator matrix 
\begin{equation*}\label{PS} 
T=\begin{pmatrix}-\nu{\bf\Delta}-\frac12 \nu \lambda_1(\Omega)&\vv\grad\\ -\vv\div &\frac12 \nu \lambda_1(\Omega)\end{pmatrix}
\end{equation*} is positive definite (via the geometric variant of the Birman-Schwinger principle for off-diagonal  perturbations \cite[Corollary 3.4]{pap:2}).

 \end{itemize}
We also observe that the Ladyzhenskaya decay exponent $\alpha$ provides the lower bound for $\inf\spec(T)$, 
 $$
\alpha =\nu \lambda_1(\Omega) (1-\rlad ) \le 2\cdot \inf \spec(T),
 $$ 
which is asymptotically sharp  in the sense that 
 $$
 \lim_{\rlad\downarrow 0}\frac{\alpha}{\inf \spec(T)}=2.
 $$
All that combined gives the  direct   operator-theoretic interpretation for  the 2D-Ladyzhenskaya  result   in the framework of the linearization method in  hydrodynamical stability theory.

The paper is organized as follows.

In Section 2,   the Stokes operator  \(S\) is defined as a self-adjoint operator in the Hilbert space $\cH$. In Theorem  \ref{osn2}, based on  the   quadratic numerical range  variational principle  (see Appendix \ref{Appendix}),
we obtain an  estimate for the first positive eigenvalue  and explicitly  evaluate  the  lower edge of \(S\). Theorem \ref{osn3}, the  Tan $2\Theta$-Theorem  in fluid dynamics, is deduced  from  a general rotation angle bound  obtained in  \cite{pap:2} for indefinite  forms. 
In Theorem \ref{xuxu}, we show that at low Reynolds numbers,    the qualitative spectral analysis for the Stokes operator is closely related to the  one for its  principal symbol.

In Section \ref{Sec:Motivation}, under the hypothesis that the generalized Reynolds number is less than one, we discuss the Stability Laws and provide an operator-theoretic interpretation for the Ladyzhenskaya Stability Theorem  \cite[Theorem 6.5.12]{Lad}. 
 
Appendix A contains supplementary material beyond the main scope of  the exposition and deals with the dimensional analysis of the problem in question.
 
First, we provide a  (heuristic) justification supporting the appearance of  the characteristic velocity  parameter \(v_*\) in the  definition of the Stokes operator.

Next, as a result of the dimensional analysis, we naturally arrive at dimensionless variables such as the  generalized   Reynolds  and Strouhal type numbers, see \eqref{VLnu} and \eqref{ReSt} for their definition.
We also  show that 
 at low Reynolds numbers, their  product  and  ratio is proportional to the distance from the bottom of the spectrum of the Stokes operator to the origin and  the length of spectral gap of the diagonal part of the Stokes operator, respectively
 (see \eqref{tor1} and  \eqref{tor2}). This observation is illustrated in the Strouhal-Reynolds-Rotation angle diagram Fig. 2.

In Appendix \ref{Appendix}, we briefly recall representation theorems for indefinite (saddle-point) forms and provide necessary information on the properties of their  quadratic numerical range (cf. \cite{T} for the concept of quadratic numerical ranges for operator matrices). 
\vspace{12pt}

We adopt the following notation. 
In the Hilbert space \(\cH\) we use the scalar product \(\langle \;\cdot\;,\;\cdot\;\rangle\) semi-linear in the first and linear in the second component.
 $I_{\fK}$ denotes the identity operator on a Hilbert space
$\fK$, where we frequently omit the subscript. 
Given a self-adjoint  operator \(S\) and a  Borel set \(M\) on the real axis, the corresponding   spectral projection is denoted by \(\EE_S(M)\).

Given an orthogonal decomposition $\fK_0\oplus\fK_1$ of the Hilbert
space $\fK$ and dense subsets $\cK_i\subset\fK_i$, $i=0,1$, by
$\cK_0\oplus\cK_1$ we denote a subset of $\fK$ formed by the vectors
$\begin{pmatrix} x_0 \\ x_1 \end{pmatrix}$ with $x_i\in\cK_i$, $i=0,1$.

\section{The Stokes Operator}\label{sec:Stokes}

Assume that $\Omega$  is a bounded \(C^2\)-domain in \(\R^n$, $n\geq 2\),  \({\bf\Delta}=\Delta \cdot I_n\) is the vector-valued Dirichlet Laplacian,  with $I_n$  the identity operator  in $\C^n$, and  $\nu>0$ and $\vv\ge 0$ are parameters.
In the direct sum of Hilbert spaces 
 $$\cH=\cH_+\oplus\cH_-,$$  where 
$\cH_+=L^2(\Omega)^n$ and $\cH_-= L^2(\Omega)$ stand for  the ``velocity'' and ``pressure'' subspaces, respectively,
consider  the Stokes block operator matrix (cf. \cite{FFMM}) given by
\begin{equation}\label{StokesMatrix} 
\begin{pmatrix}-\nu{\bf\Delta}&\vv\grad\\ -\vv\div &0\end{pmatrix}.
\end{equation}

We introduce a self-adjoint realization  \(S=S(\nu,\vv)\) of \eqref{StokesMatrix}
as a unique self-adjoint operator  
associated with the symmetric sesquilinear (saddle-point)  form 
\[\begin{aligned}
& \fs[v\oplus p,u\oplus q]\\& = \nu\sum_{j=1}^n \int_\Omega \langle D_j v(x),D_ju(x)\rangle_{\mathbb{C}^n}dx-\vv\int_\Omega\overline{\div v(x)} q(x)dx-\vv\int_\Omega \div u(x) \overline{p(x)} dx\\& =
\nu \langle\text{{\bf grad }}v, \text{{\bf grad} }u\rangle-\vv\langle \div v,q\rangle-\vv\langle p,\div u\rangle
 \end{aligned}\] 
 defined on 
$$
\Dom[\fs]=\{v\oplus p\, |\, v\in H_0^1(\Omega)^n, \,\, p\in L^2(\Omega)\}.
$$
Here \(\text{{\bf grad}}\) denotes the component-wise application of the standard gradient operator initially defined  on the Sobolev space \(H_0^1(\Omega)\).

Using the inequality 
\begin{equation*}\label{vlozh}
|\langle\div v, p\rangle|\le \epsilon \|(-{\bf\Delta})^{1/2}v\|^2+C(\epsilon)(\|v\|^2+\|p\|^2 )
\end{equation*}
$$
v\in  H_0^1(\Omega)^n,\quad p \in L^2(\Omega),
$$
 valid for any $\epsilon >0$, with $C(\epsilon)$ an appropriately chosen constant, 
one verifies that $\fs$  on \(\dom[\fs]\) is a closed semi-bounded form (by the KLMN-Theorem).

 We also remark  that    
the closure of the operator matrix \eqref{StokesMatrix} defined on  \((H^2(\Omega)
\cap  H_0^1(\Omega))^n\oplus H^1(\Omega)\) is a self-adjoint operator, see  \cite{FFMM}, which yields another characterization for the operator  \(S=S(\nu,\vv)\).

We now provide more detailed information on the location of the spectrum of the Stokes operator \(S\).

\begin{theorem}\label{osn2}  Let $S$ be the Stokes operator. Then
\begin{itemize}
\item[(i)]
the  positive  spectrum of $S$   is discrete and
\begin{equation*}\label{assass2}\lambda_1(S)\ge  \nu \lambda_1(\Omega),\end{equation*}
where \(\lambda_1(S)\) is the smallest positive eigenvalue of $S$ and $\lambda_1(\Omega) $ is the principal 
eigenvalue of the  Dirichlet Laplacian in $\Omega$. Moreover, 
the asymptotic representation 
\begin{equation}\label{assass}
\lambda_1(S)=\nu \lambda_1(\Omega)(1+o(1) ) \quad \text{as }\quad \nu\to \infty\quad \text{or}\quad v_*\to 0, 
\end{equation}
holds;

\item[(ii)] the point $\lambda=0$ is an isolated simple eigenvalue of $S$;

\item[(iii)] the bottom of the (essential)  spectrum  of the Stokes operator is explicitly given by 
 \begin{equation}\label{sminus}
 \inf \spec(S)=-\frac{\vv^2}{\nu}.
\end{equation}
\end{itemize}

 In particular, 
$$
\inf \spec(S)=-\frac14\nu \lambda_1(\Omega)[\rlad]^2,
$$
where
\begin{equation}\label{reyrey}
\rlad=\frac{2\vv}{\nu \sqrt{\lambda_1(\Omega)}}
\end{equation} is the generalized Reynolds number. Moreover, one has the estimate
\begin{equation}\label{eq:bottom}|\inf \spec (S)|\leq \frac14 \left [\rlad\right ]^2 \lambda_1(S).
\end{equation}
\end{theorem}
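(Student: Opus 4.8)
The plan is to realise $S$ as the off-diagonal saddle-point block operator $\bigl(\begin{smallmatrix} A & B\\ B^* & 0\end{smallmatrix}\bigr)$ on $\cH=\cH_+\oplus\cH_-$, with velocity block $A=-\nu\mathbf{\Delta}$ (compact resolvent, $\inf\spec A=\nu\lambda_1(\Omega)$), vanishing pressure block, and coupling $B=\vv\,\grad$, $B^*=-\vv\,\div$, and to read the three assertions off the quadratic numerical range variational principle of Appendix~\ref{Appendix}. For unit $\hat v\in\cH_+$ and $\hat p\in\cH_-$ in the form domain the associated $2\times2$ matrix is $\bigl(\begin{smallmatrix} a & -\vv\langle\div\hat v,\hat p\rangle\\ -\vv\langle\hat p,\div\hat v\rangle & 0\end{smallmatrix}\bigr)$ with $a=\nu\|\grad\hat v\|^2$, whose eigenvalues are $\lambda_\pm=\tfrac12(a\pm\sqrt{a^2+4b^2})$, $b=\vv\,|\langle\div\hat v,\hat p\rangle|$. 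The single elementary fact driving the whole analysis is the inequality $\|\div v\|\le\|\grad v\|$ for $v\in H^1_0(\Omega)^n$ (pointwise Cauchy--Schwarz, or $|\xi\cdot\widehat v(\xi)|\le|\xi|\,|\widehat v(\xi)|$ on the Fourier side), \emph{saturated asymptotically} by high-frequency longitudinal fields.

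For (i) I would use $\lambda_+\ge a\ge\nu\lambda_1(\Omega)$, so the variational principle gives $\lambda_1(S)\ge\nu\lambda_1(\Omega)$; it also furnishes the matching upper bound $\lambda_1(S)\le\tfrac12\bigl(\nu\lambda_1(\Omega)+\sqrt{\nu^2\lambda_1(\Omega)^2+4\vv^2 d_1}\bigr)$ by testing with a first Dirichlet eigenfunction placed in one velocity slot ($d_1=\|\div\hat v\|^2$ fixed), and letting $\nu\to\infty$ or $\vv\to0$ forces the asymptotics \eqref{assass}. For the lower edge (iii) I would first optimise $\hat p$ to get $b=\vv\|\div\hat v\|$ and then minimise
\[
\inf\lambda_-=\inf_{\hat v}\frac{-2\vv^2 d}{\nu g+\sqrt{\nu^2g^2+4\vv^2 d}},\qquad g=\|\grad\hat v\|^2,\ d=\|\div\hat v\|^2,
\]
using $d\le g$ and then $g\to\infty$ (legitimate because longitudinal high-frequency unit fields realise $d/g\to1$, $g\to\infty$); this yields exactly $\inf\spec(S)=-\vv^2/\nu$. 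That $-\vv^2/\nu$ lies in the \emph{essential} spectrum I would certify by the explicit singular sequence $v_k=a_k\,\hat k\,e^{\ii k\cdot x}\chi$, $\chi\in C_0^\infty(\Omega)$, with $a_k$ normalised so that $\|\grad v_k\|\equiv\vv/\nu$ while $\|v_k\|\to0$, paired with $p_k=y_k\,\div v_k/\|\div v_k\|$, $y_k=\sqrt{1-\|v_k\|^2}$: then $\fs[(v_k,p_k)]\to\nu(\vv/\nu)^2-2\vv^2/\nu=-\vv^2/\nu$ while $(v_k,p_k)\rightharpoonup0$. Discreteness of the positive spectrum I would get from the pressure Schur complement $S_2(\lambda)=-\lambda+\vv^2\div(A-\lambda)^{-1}\grad$, a zeroth-order operator whose principal symbol $-|\xi|^2/(\nu|\xi|^2-\lambda)$ tends to $-1/\nu$, so $\specess(S_2(\lambda))=\{-\lambda-\vv^2/\nu\}$ and $\specess(S)\cap(-\infty,\nu\lambda_1(\Omega))=\{-\vv^2/\nu\}$.

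For (ii) the kernel is found directly: $S(v\oplus p)=0$ forces $\div v=0$, hence $\nu\|\grad v\|^2=0$ and $v=0$, and then $\grad p=0$, so $\ker S=\C\,(0\oplus\mathbf 1)$ is one-dimensional, i.e.\ $0$ is a simple eigenvalue. Isolation above is the gap $(0,\nu\lambda_1(\Omega))$ from (i). Isolation below is the one place the quadratic numerical range does not suffice (its lower branch reaches $0$ whenever $b=0$), and here I would invoke the Ladyzhenskaya--Babu\v{s}ka--Brezzi inf-sup condition $\|\grad p\|_{H^{-1}}\ge\beta\|p\|$ for mean-zero $p$: the same $S_2(\lambda)$ is block diagonal with respect to $\C\mathbf 1\oplus L^2_0(\Omega)$, equals $|\lambda|$ on constants, and is $\le|\lambda|-c(\lambda)$ on $L^2_0(\Omega)$ with $c(\lambda)\to\vv^2\beta^2/\nu>0$ as $\lambda\to0$, so $0\notin\spec(S_2(\lambda))$, i.e.\ $\lambda\notin\spec(S)$, for all small $|\lambda|>0$.

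The consolidated statements are then algebra: inserting the definition \eqref{reyrey} of $\rlad$ into (iii) gives $\inf\spec(S)=-\vv^2/\nu=-\tfrac14\nu\lambda_1(\Omega)[\rlad]^2$, and since $|\inf\spec(S)|=\vv^2/\nu$ and $\tfrac14[\rlad]^2=\vv^2/(\nu^2\lambda_1(\Omega))$, the bound \eqref{eq:bottom} is literally equivalent to $\nu\lambda_1(\Omega)\le\lambda_1(S)$, i.e.\ to part (i), hence immediate. I expect the real work to sit in (iii): pinning the sharp constant $-\vv^2/\nu$ requires both the clean lower bound from $\|\div v\|\le\|\grad v\|$ and the carefully normalised longitudinal high-frequency Weyl sequence that saturates it, and the only conceptually separate input is the inf-sup condition needed to detach $0$ from the negative spectrum in (ii).
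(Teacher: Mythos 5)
Your overall route coincides with the paper's: the quadratic numerical range (Lemma \ref{numran}), the elementary bound $\|\div v\|\le\|\mathbf{grad}\, v\|$ for the lower estimate in (iii), and high-frequency longitudinal trial pairs $u=f e^{\ii k x_1}e_1$, $p=f e^{\ii k x_1}$ to saturate $-\vv^2/\nu$; the reduction of \eqref{eq:bottom} to part (i) is the same algebra. The genuine differences are that you prove the essential-spectrum assertions (membership of $-\vv^2/\nu$ in $\specess(S)$, discreteness of the positive spectrum, isolation of $0$ from below) by hand --- singular sequences, the pressure Schur complement, and the inf-sup condition --- whereas the paper simply cites the known fact that $\specess(S)$ is a two-point negative set and lets discreteness of the positive spectrum and isolation of $0$ follow from that. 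Your versions are more self-contained but do more than the theorem requires; note also that your Weyl-sequence argument, as stated, only shows $-\vv^2/\nu\in\spec(S)$ and needs one more line (weak convergence to $0$ excludes an isolated eigenvalue of finite multiplicity) to land in the essential spectrum.

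The one step that does not go through as written is the upper bound $\lambda_1(S)\le\tfrac12\bigl(\nu\lambda_1(\Omega)+\sqrt{\nu^2\lambda_1(\Omega)^2+4\vv^2 d_1}\bigr)$, which you attribute to ``the variational principle.'' The inclusions of Lemma \ref{numran} run the wrong way for this: $\spec(S)\subseteq\overline{W^2[\fs]}$ together with $\inf \spec(S)=\inf W^2[\fs]$ and $\sup\spec (S)=\sup W^2[\fs]$ bounds the spectrum by the quadratic numerical range, but $\lambda_1(S)$ is an \emph{interior} point of the spectrum (the infimum is $-\vv^2/\nu$), so exhibiting one point of the upper branch $\lambda_+(\hat v,\hat p)$ does not bound $\lambda_1(S)$ from above. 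A min--max principle for the $\lambda_+$-branch of block operator matrices would justify your inequality, but no such principle is among the stated tools. The paper closes this gap elementarily: with $x=f\oplus 0$, $f$ a Dirichlet eigenfunction, one has $\|(S-\nu\lambda_1(\Omega))x\|=\vv\|\div f\|$, hence $\dist(\nu\lambda_1(\Omega),\spec(S))\le\vv\|\div f\|/\|f\|$, and since $(0,\nu\lambda_1(\Omega))$ is already known to be free of spectrum this forces $\nu\lambda_1(\Omega)\le\lambda_1(S)\le\nu\lambda_1(\Omega)+\vv\|\div f\|/\|f\|$, which yields \eqref{assass}. Substituting that distance-to-spectrum argument for your appeal to the variational principle repairs the proof; everything else in your proposal is sound.
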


 \begin{proof}

(i).  It is well known that  the essential spectrum of the Stokes operator  $S$ is purely negative  \cite{Atkinson}, \cite{FFMM}, \cite{Grubb}, therefore,  the positive spectrum of $S$ is discrete.

The inequality
\begin{equation}\label{simsim} \lambda_1( S) \ge \nu \lambda_1(\Omega).
 \end{equation}
follows from Lemma \ref{numran} (vi) (see Appendix \ref{Appendix}).
 
To prove the asymptotics \eqref{assass}, we proceed as follows.

Let   \(\lambda_1(\Omega)\)  denote the first positive eigenvalue of the vector-valued Dirichlet problem 
$$-{\bf\Delta} f=\lambda_1(\Omega) f,$$
$$f|_{\partial \Omega}=0,$$
with $f$ the corresponding eigenfunction.  Introducing
$
 v=(f,0)^T \in \cH,
$
 one observes that 
\begin{equation}\label{trialfunction}\left\|Sv-\nu\lambda_1(\Omega)v\right\|=\vv\left\| \div f \right \|. \end{equation}

Using the standard estimate
$$
\dist (\lambda, \spec (T))\le \frac{\|(T-\lambda I)x\|}{\|x\|}, \quad x\in \Dom (T),
$$
valid for  any self-adjoint operator $T$,
from \eqref{trialfunction}  it follows that 
$$
\dist (\nu\lambda_1(\Omega), \spec (S))\le \vv \frac{\|\div f\|}{\|f\|}
.$$

This yields the claimed asymptotics \eqref{assass}  for \(\vv\to 0\) and, by rescaling, for \(\nu \to \infty\).  
	Taking into account that  the open interval  $(0, \nu \lambda_1(\Omega))$ is free of the spectrum of $S$, we even get that 
  \begin{equation}\label{twosidedestimate}
\nu\lambda_1(\Omega)\le  \lambda_1(S)\le  \nu\lambda_1(\Omega)+\vv\frac{\|\div f\|}{\|f\|}.
 \end{equation}

(ii). We claim that \[\Ker(S)=\{0\oplus p\;|\; p \text{ constant }\}\subset L^2(\Omega)^n\oplus L^2(\Omega).\]
Indeed,  by  \cite[Theorem 1.3]{SchPaper}, 
\begin{equation}\label{eqKer}\Ker(S)=(\Ker(-{\bf\Delta})\cap \cL_+)\oplus \cL_-\subset L^2(\Omega)^n\oplus L^2(\Omega),\end{equation}
where\[\cL_+=\{v\in H_0^1(\Omega)^n\;|\; \langle \div v, p \rangle=0 \;\text{ for all } p \in L^2(\Omega)\}\] and 
\begin{equation}\label{ssdd}
\cL_-=\{p\in L^2(\Omega) \;|\; \langle \div v, p \rangle=0 \;\text{ for all } v \in H_0^1(\Omega)^n\}.
\end{equation}
Since $\Ker(-{\bf\Delta})$ is trivial, we have that \(\Ker(S)=\cL_-\). This  means that the pressure \(p\) is a constant function for  \(\grad p=0\) (which is due to \eqref{ssdd}: \(\langle \div v, p \rangle=0\) for all \(v\)).

Since the essential  spectrum of the Stokes operator is purely negative, it follows that \(\lambda =0\) is an isolated eigenvalue of \(S\) of multiplicity one.

(iii). We prove \eqref{sminus} by  applying  Lemma  \ref{numran} (iv)  (see   Appendix B)
that states that 
  \begin{equation}\label{eins}
\inf \spec (S)=\inf W^2[\fs],
\end{equation}
where  
\begin{equation}\label{zwai}
W^2[\fs]=\bigcup_{\substack{v\oplus p\in H_0^1(\Omega)^n\oplus L^2(\Omega),\\ \|v\|=\|p\|=1}}\mathrm{spec}\begin{pmatrix}\nu\left\|{\bf grad}\; v\right\|^2& -\vv\overline{\langle\div v,p\rangle}\\-\vv\langle\div v,p\rangle&0\end{pmatrix} 
\end{equation}
is the quadratic numerical range (associated with the decomposition $\cH=(L^2(\Omega))^n\oplus L^2(\Omega)$).  
Consider the trial functions
\begin{equation}\label{ssddff}
 u(x_1, \dots, x_n)=f(x_1, \dots, x_n)e^{ik x_1}\begin{pmatrix}1\\0\\\vdots\\0\end{pmatrix}, \quad p(x_1, \dots, x_n)=f(x_1, \dots, x_n)e^{ik x_1},
 \end{equation}
 where \(f\in C_0^\infty(\Omega)\) with \(\left\|f\right\|=1\) ($k$ is a large parameter).
   From \eqref{eins} and \eqref{zwai} one  gets the estimate
\begin{equation}\label{ffff}
\inf \spec (S)\le  \inf \spec\begin{pmatrix}\nu\left\|{\bf grad}\; u\right\|^2& -\vv\overline{\langle\div u,p\rangle}\\-\vv\langle\div u,p\rangle&0\end{pmatrix}.
\end{equation}  Since for $u$ and $p$ given by \eqref{ssddff} we have that 
$$\nu \left\|{\bf grad}\: u \right\|^2= \nu k^2 +{\mathcal O}(k)\quad \text{and}\quad  
\vv\langle \div u, p\rangle=i\vv k+{\mathcal O}(1)\quad \text{as } k\to \infty,
$$
inequality \eqref{ffff} yields
\begin{align*}
\inf \spec (S)&\le\lim_{k \to \infty}  \inf\spec \begin{pmatrix}\nu k^2+{\mathcal O}(k)& i\vv k+{\mathcal O}(1)\\-i\vv k+ {\mathcal O}(1)&0\end{pmatrix}
\\&=
\lim_{k \to \infty}
\inf \spec\begin{pmatrix}\nu k^2& i\vv k\\-i\vv k&0\end{pmatrix} =-\frac{\vv^2}{\nu}.
\end{align*} 

To prove the opposite inequality, suppose that 
 $
v\in H_0^1(\Omega)^n$  and $ p \in  L^2(\Omega)$ are chosen in such a way that   $\|v\|=\|p\|=1$.
Then it is clearly seen that 
\begin{equation}\label{drei}
-\frac{\vv^2}{\nu}\leq\inf \mathrm{spec}\begin{pmatrix}\nu\left\|{\bf grad}\; v\right\|^2&\vv \|{\bf grad}\; v\|
\\ \vv\|{\bf grad}\; v\|
&0\end{pmatrix}
\end{equation}
$$
\quad \hskip 1cm\le  \inf\mathrm{spec}\begin{pmatrix}\nu\|{\bf grad}\; v\|^2& -\vv\overline{\langle\div v,p\rangle}\\-\vv\langle\div v,p\rangle&0\end{pmatrix}.
$$
Here, we used the inequality
\[|\langle\div v,p\rangle|\leq \left\|\div v\right\|\cdot \left\|p\right\|\leq \left\|{\bf grad}\; v\right\|,\]
and the observation that   the lowest eigenvalue   of a symmetric \(2\times2\)  matrix decreases whenever  the absolute value of its off-diagonal entries increases.
\end{proof}

\begin{remark}
Notice that the upper estimate 
\begin{equation*}
 \inf \spec (S)\le  -\frac{\vv^2}{\nu}
\end{equation*}
also follows  from the known fact  that  for any bounded domain $\Omega\subset \R^n$ whose boundary is of class $C^2$
 the essential  spectrum of the Stokes operator is a two-point set 
$$
\specess(S)=\left \{  -\frac{\vv^2}{\nu},- \frac{\vv^2}{2\nu}\right \}
$$
 (see, e.g., \cite[Theorem 3.15]{FFMM} where the corresponding result is proven for $\nu=1$, $\vv=-1$, and can be adapted to the case in question by rescaling).
\end{remark}

\begin{remark} As it follows form the proof,  inequality \eqref{eq:bottom}  is asymptotically sharp in the sense that 
 \begin{equation}\label{aposteriori}
\lim_{\nu \to \infty}\frac{|\inf \spec (S)|}{ \lambda_1(S)}=\lim_{\vv \downarrow 0}\frac{|\inf \spec (S)|}{ \lambda_1(S)}= \frac14[\rlad]^2 .
\end{equation}
\end{remark}

Our next ultimate  goal is  
to obtain  bounds  on the maximal rotation angle between the positive subspace of the Stokes operator and the positive subspace of its diagonal part.

Recall that if $P$ and $Q$  are orthogonal projections and $\Ran(Q)$ is a graph subspace with respect to the decomposition $\cH=\Ran(P)\oplus \Ran(P^\perp) $,
then  the operator angle $\Theta$  between the subspaces $\Ran(P)$ and $\Ran(Q)$ is defined to be  a unique self-adjoint operator in the Hilbert space $\cH$ 
with the spectrum in  $[0,\pi/2]$ 
such that
$$
\sin^2\Theta=PQ^\perp|_{\ran(P)}\;.
$$

Without any attempt to give a complete overview of the whole work done on pairs of subspaces and  operator angles, we mention the pioneering works \cite{Davis,Dix1,Dix2,Friedr,Halmos,Krein}.
For more recent works on operator angles and their norm estimates, we refer to \cite{AlbMot1, AlbMot2, Cuenin, Davis:Kahan, pap:2, Knyazev, KMM:1,Seelmann, T} and references therein. 

We now present our main result.

\begin{theorem}[\sc{The \(\tan 2\Theta\)-Theorem in Fluid dynamics}]\label{osn3} 

Denote by $\Theta$   the operator angle between the positive subspace  of the Stokes operator
 $\Ran\EE_S((0, \infty))$ and the subspace $\cH_+=L^2(\Omega)^n\oplus\{0\}$, the positive subspace of its diagonal part.

Then 
\begin{equation}\label{estim}
\tan 2\|\Theta\|\leq \rlad,
 \end{equation}
where $\rlad$ is the generalized Reynolds number.
 \end{theorem}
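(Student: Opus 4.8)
The plan is to realize the Stokes form $\fs$ as an off-diagonal perturbation of its diagonal part and then to invoke the general rotation bound of \cite{pap:2}. First I would split $\fs = \fa + \fv$ with respect to the decomposition $\cH = \cH_+\oplus\cH_-$, where the diagonal form $\fa[v\oplus p]=\nu\|{\bf grad}\,v\|^2$ represents the self-adjoint operator $A=\diag(-\nu{\bf\Delta},0)$, while the off-diagonal form $\fv[v\oplus p,u\oplus q]=-\vv\langle\div v,q\rangle-\vv\langle p,\div u\rangle$ carries the entire coupling between the velocity and pressure components. The positive spectral subspace of $A$ is exactly $\cH_+=L^2(\Omega)^n\oplus\{0\}$, and, since $-\nu{\bf\Delta}$ has its spectrum in $[\nu\lambda_1(\Omega),\infty)$ whereas the lower block contributes only the point $\{0\}$, the spectrum of $A$ is separated by the open gap $(0,\nu\lambda_1(\Omega))$. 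Thus $\Theta$ measures precisely the rotation of the positive subspace caused by switching on the coupling $\fv$.

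Next I would verify the hypotheses under which the abstract $\tan 2\Theta$-bound applies, namely that $\Ran\EE_S((0,\infty))$ is a graph subspace over $\cH_+$ and that a spectral gap survives the perturbation. This is furnished by Theorem \ref{osn2}: the interval $(0,\nu\lambda_1(\Omega))$ is free of the spectrum of $S$, the point $\lambda=0$ is an isolated simple eigenvalue, and the essential spectrum lies at $-\vv^2/\nu$ and $-\vv^2/(2\nu)$, hence strictly below $0$. Consequently the positive spectrum of $S$ remains in $[\nu\lambda_1(\Omega),\infty)$ and is separated from the rest of $\spec(S)$, so the off-diagonal perturbation theory of \cite{pap:2} guarantees that $\Ran\EE_S((0,\infty))$ is a graph subspace and that the operator angle $\Theta$ is well defined.

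The third step is to quantify the coupling relative to the positive diagonal part. Using $\|\div v\|\le\|{\bf grad}\,v\|$ for $v\in H_0^1(\Omega)^n$ together with $\nu\|{\bf grad}\,v\|^2=\fa[v\oplus 0]$, I obtain $|\fv[v\oplus 0,0\oplus q]|=\vv|\langle\div v,q\rangle|\le\vv\|{\bf grad}\,v\|\,\|q\|=\frac{\vv}{\sqrt{\nu}}\,\fa[v\oplus 0]^{1/2}\|q\|$, so the off-diagonal form has relative bound $\gamma=\vv/\sqrt{\nu}$ with respect to $A_+^{1/2}$ on the velocity side and the bare $L^2$-norm on $\cH_-$, the latter being the natural norm there because the lower diagonal block vanishes. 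A scalar $2\times 2$ model $\bigl(\begin{smallmatrix}\beta&\bar w\\ w&0\end{smallmatrix}\bigr)$ already shows that in this configuration one expects the eigenvector rotation $\theta$ to obey $\tan 2\theta=2|w|/\beta=2\gamma/\sqrt{\beta}$, which identifies the correct combination of the data.

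Finally I would feed the gap bottom $\beta=\nu\lambda_1(\Omega)$ and the relative bound $\gamma=\vv/\sqrt{\nu}$ into the general rotation estimate of \cite{pap:2}, which in this setting yields $\tan 2\|\Theta\|\le 2\gamma/\sqrt{\beta}$. Substituting gives $\tan 2\|\Theta\|\le\frac{2\vv/\sqrt{\nu}}{\sqrt{\nu\lambda_1(\Omega)}}=\frac{2\vv}{\nu\sqrt{\lambda_1(\Omega)}}=\rlad$, which is \eqref{estim}. The main obstacle I anticipate is not the computation but the bookkeeping at the interface with \cite{pap:2}: one must check that the degenerate lower block $A_-=0$ genuinely fits the hypotheses of the abstract theorem (the ``lower'' part being the single point $0$ rather than a strictly negative operator), and that its normalization of the relative bound against the square root of the gap reproduces exactly the combination $2\gamma/\sqrt{\beta}$, so that no spurious constant appears and the bound comes out sharply as $\rlad$.
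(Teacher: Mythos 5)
Your proposal follows essentially the same route as the paper: split $\fs$ into its diagonal part and the off-diagonal coupling $\fv$, check via Theorem \ref{osn2} that the positive spectral subspace of $S$ sits over $\cH_+$, and then invoke the abstract rotation bound of \cite{pap:2} together with the two elementary estimates $\|\div v\|\le\|{\bf grad}\,v\|$ and the Poincar\'e inequality; the constant comes out as $\rlad$ either way. The one step you have not actually performed is the passage from \cite[Theorem 3.1]{pap:2} to the clean bound $\tan 2\|\Theta\|\le 2\gamma_0/\sqrt{\beta}$ with $\gamma_0=\vv/\sqrt{\nu}$ and $\beta=\nu\lambda_1(\Omega)$: that theorem is not stated in terms of a relative bound of $\fv$ against $A_+^{1/2}$, but gives $\tan 2\|\Theta\|\le\gamma$ where $\gamma$ is an \emph{infimum over the splitting point} $\mu\in(0,\nu\lambda_1(\Omega))$ of $\sup_{v\oplus p}\frac{2\vv|\Re\langle\div v,p\rangle|}{\nu\|{\bf grad}\,v\|^2-\mu\|v\|^2+\mu\|p\|^2}$. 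To recover your $2\gamma_0/\sqrt{\beta}$ one must bound the denominator from below by $2\sqrt{(\nu-\mu/\lambda_1(\Omega))\,\mu}\;\|{\bf grad}\,v\|\,\|p\|$ (Poincar\'e plus the arithmetic--geometric mean inequality) and then minimize over $\mu$, which lands at $\mu_{\mathrm{opt}}=\tfrac12\nu\lambda_1(\Omega)$ and yields exactly $\rlad$. That optimization is the computational core of the paper's proof, so it needs to be written out rather than attributed to the citation; your $2\times2$ model correctly predicts the combination $2\gamma_0/\sqrt\beta$ but is only a heuristic for it. Your concerns about the degenerate lower block $A_-=0$ and the graph-subspace property are reasonable but are already covered by the saddle-point-form framework of \cite{pap:2} and \cite{SchPaper} (only $A_\pm\ge0$ and the gap $(0,\nu\lambda_1(\Omega))$ of the diagonal part are required), with Theorem \ref{osn2} supplying the spectral separation for $S$ itself.
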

 
  \begin{proof} 
Denote by $Q$   be   the orthogonal projection from $\cH$ onto   the positive spectral  subspace \(\ran E_S((0,\infty))\) of the Stokes operator $S$ and let $P$ be the orthogonal projection onto  $\cH_+=L^2(\Omega)^n\oplus\{0\}$.

From  \cite[Theorem 3.1]{pap:2}  it follows
 that 
 \begin{equation}\label{nununu}\sin \|\Theta\|=\|P-Q\|\leq \sin\left(\frac{1}{2}\arctan \gamma \right),\end{equation}
 where 
  \begin{equation*}
\gamma=\inf_{\mu\in (0,\nu  \lambda_1(\Omega))} \sup_{\substack{v\oplus p\in H_0^1(\Omega)^n\oplus L^2(\Omega)}} \frac{2\vv|\Re \langle \div v,p\rangle|}{\nu\langle{\bf grad}\;  v, {\bf grad}\; v \rangle-\mu\left\|v\right\|^2+\mu\left\|p\right\|^2}. 
  \end{equation*}

Using the Poincar\'{e} inequality
$$
\|w\|\le \frac{1}{ \sqrt{\lambda_1(\Omega)}}\|\nabla   w\|,\quad w \in H_0^1(\Omega),
$$
and the bound
$$
\|\div \, v\|\le \|{\bf grad }\, v\|,
$$
 one then obtains that 
\begin{align}
\gamma&\leq \inf_{\mu\in (0,\nu  \lambda_1(\Omega))}  \sup_{\substack{v\oplus p\in H_0^1(\Omega)^n\oplus L^2(\Omega)}} \frac{2\vv\| {\bf grad} \,v\|\cdot \|p\|}{(\nu-(\lambda_1(\Omega))^{-1}\mu)\|{\bf grad}\; v\|^2 +\mu\left\|p\right\|^2 } \nonumber\\
&\leq \inf_{\mu\in (0,\nu  \lambda_1(\Omega))} \frac{\vv}{\sqrt{(\nu-(\lambda_1(\Omega))^{-1}\mu)\mu}}. \label{inff}
\end{align}
Since the infimum \eqref{inff}  is attained at the midpoint $\mu_{\mathrm{opt}}$ of the interval  $(0,\nu  \lambda_1(\Omega))$ with 
\begin{equation}\label{muopt}
\mu_{\mathrm{opt}}=\frac12 \nu \lambda_1(\Omega),
\end{equation}
we obtain the estimate 
\begin{equation}\label{ozz}
 \gamma \le \frac{2\vv}{ \nu\sqrt{ \lambda_1((\Omega)}}=\rlad.
 \end{equation}
The estimate \eqref{estim} now follows from   \eqref{nununu}.

 \end{proof}
 We remark that performing the spectral analysis of the Stokes operator can  essentially be reduced to the one of its principal symbol which is given by  the following $2\times 2$ numerical matrix (cf. \eqref{drei})
\begin{equation*}\label{fffff}
\fs(\nu,\vv;k)=\begin{pmatrix}
\nu k^2& i\vv k\\
-i\vv k&0
\end{pmatrix}
\end{equation*}  with the right choice for the ``wave number" \(k=\sqrt{\lambda_1(\Omega)}\), where \(\lambda_1(\Omega)\) is the principal Dirichlet eigenvalue of the Laplace operator on the domain \(\Omega\). 

 \begin{theorem}\label{xuxu}
Let  $\fs= \fs(\nu,\vv;\lambda_1(\Omega))$ be
the principal symbol 
of the Stokes operator is evaluated at the wave number \begin{equation}\label{eq:k}k=\sqrt{\lambda_1(\Omega)}.\end{equation} 

Then 
\begin{align}
1&=\lim_{v_*\downarrow 0}\frac{\inf\spec  (S)}{\inf \spec (\fs)}=  \lim_{\nu\to \infty}\frac{\inf \spec (S)}{\inf \spec(\fs)}.
\label{realVsModel}
\end{align}

 Moreover, the operator angle $\Theta$  referred  to in Theorem \ref{osn3} admits the following norm estimate
\begin{equation}\label{est}\|\Theta\|\le \theta,
\end{equation}
with $\theta$  the angle between the  eigenvectors of the  $2\times 2$ matrices
$\fs(\nu,\vv;\sqrt{\lambda_1(\Omega)})$ and $\fs(\nu,0;\sqrt{\lambda_1(\Omega)})$
corresponding to their positive eigenvalues.

\end{theorem}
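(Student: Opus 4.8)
The plan is to reduce both assertions to the explicit spectral analysis of the Hermitian $2\times2$ symbol $\fs=\fs(\nu,\vv;\sqrt{\lambda_1(\Omega)})$ and then to invoke the results already proved above. For the limit relation \eqref{realVsModel} I would diagonalize $\fs$ directly: its characteristic polynomial is $\lambda^2-\nu\lambda_1(\Omega)\lambda-\vv^2\lambda_1(\Omega)$, so the unique negative eigenvalue is
\[
\inf\spec(\fs)=\frac{\nu\lambda_1(\Omega)-\sqrt{\nu^2\lambda_1(\Omega)^2+4\vv^2\lambda_1(\Omega)}}{2}.
\]
Combining this with the exact value $\inf\spec(S)=-\vv^2/\nu$ from Theorem \ref{osn2}(iii) and simplifying (rationalizing the resulting expression), the ratio turns out to be a function of the Reynolds number alone,
\[
\frac{\inf\spec(S)}{\inf\spec(\fs)}=\frac{1}{2}\Bigl(1+\sqrt{1+[\rlad]^2}\Bigr).
\]
Since both $\vv\downarrow0$ (with $\nu$ fixed) and $\nu\to\infty$ (with $\vv$ fixed) amount to $\rlad\downarrow0$, the right-hand side tends to $1$ in either regime, which is precisely \eqref{realVsModel}. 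This part is a pure one-variable computation with no subtlety.

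For the angle bound \eqref{est} I would first identify the relevant eigenvectors. The unperturbed symbol $\fs(\nu,0;\sqrt{\lambda_1(\Omega)})=\diag(\nu\lambda_1(\Omega),0)$ has positive eigenvector $(1,0)^T$, spanning the $\cH_+$-direction, whereas the positive eigenvector of $\fs(\nu,\vv;\sqrt{\lambda_1(\Omega)})$ is proportional to $(\lambda_+,-\ii\vv\sqrt{\lambda_1(\Omega)})^T$, with $\lambda_+$ the positive root of the characteristic polynomial. Using the identity $\lambda_+^2-\vv^2\lambda_1(\Omega)=\nu\lambda_1(\Omega)\lambda_+$ (equivalently, the standard double-angle formula for the tilt of the eigenbasis of a Hermitian $2\times2$ matrix), I would show that the angle $\theta$ between these two eigenvectors satisfies
\[
\tan2\theta=\frac{2\vv\sqrt{\lambda_1(\Omega)}}{\nu\lambda_1(\Omega)}=\rlad,\qquad\text{so that}\qquad\theta=\tfrac12\arctan\rlad.
\]

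It then remains to connect $\theta$ with $\|\Theta\|$, and here I would simply return to the proof of Theorem \ref{osn3}: inequality \eqref{nununu} gives $\sin\|\Theta\|\le\sin\bigl(\tfrac12\arctan\gamma\bigr)$ with $\gamma\le\rlad$ by \eqref{ozz}, and since both $\sin$ and $\arctan$ are increasing on the relevant ranges this yields $\|\Theta\|\le\tfrac12\arctan\gamma\le\tfrac12\arctan\rlad=\theta$. Thus \eqref{est} is, in essence, a geometric reinterpretation of the Tan $2\Theta$ bound already established, and no genuinely new estimate is required. I expect no serious obstacle: the only points demanding care are the complex-phase bookkeeping in selecting the positive eigenvector of the Hermitian (rather than real-symmetric) symbol, and the verification that $\theta$ coincides \emph{exactly} with the half-angle $\tfrac12\arctan\rlad$, both of which are routine $2\times2$ manipulations.
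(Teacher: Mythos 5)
Your proposal is correct and follows essentially the same route as the paper: both parts reduce to the explicit eigenvalue/eigenvector analysis of the $2\times2$ symbol, combined with $\inf\spec(S)=-\vv^2/\nu$ from Theorem \ref{osn2}(iii) for \eqref{realVsModel} and with the bound $\|\Theta\|\le\tfrac12\arctan\gamma\le\tfrac12\arctan\rlad$ from the proof of Theorem \ref{osn3} for \eqref{est}. The only (harmless) differences are that you compute the ratio in closed form as $\tfrac12\bigl(1+\sqrt{1+[\rlad]^2}\bigr)$ where the paper uses the asymptotics of $\lambda_-(\fs)$, and that you verify $\tan2\theta=\rlad$ directly instead of citing \cite[Example 4.4]{pap:2}.
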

\begin{proof}

Let $\lambda_-(\fs)$ and $\lambda_+(\fs)$ be the  negative  and positive  eigenvalues of the $2\times 2$ matrix \(\fs(\nu, v^*; k)\), respectively. 

It is easy to see that 
$$
\lim_{k\to \infty}\lambda_-(\fs(\nu, v_*;k))=-\frac{{v_*}^2}{\nu},
$$
and that 
\begin{equation*}\label{l-}
\lambda_-(\fs(\nu, v_*;k))=-\frac{{v_*}^2}{\nu}(1+o(1))\quad \text{as}\quad \nu \to \infty\quad \text{or}\quad v_*\to 0.
\end{equation*}

Moreover, 
$$
\lambda_+\big(\fs(\nu, v_*;\sqrt{\lambda_1(\Omega)})\big)= \nu\lambda_1(\Omega)(1+o(1))\quad  \text{as}\quad  \nu \to \infty
$$
and 
\begin{equation*}\label{l+}
\lim_{v_*\downarrow 0}\lambda_+\big(\fs(\nu, v_*;\sqrt{\lambda_1(\Omega)})\big)= \nu\lambda_1(\Omega)\quad  \text{as}\quad  v_* \to 0.
\end{equation*}

Comparing these asymptotics with the representations \eqref{assass} and \eqref{sminus} in  Theorem \ref{osn2} proves   \eqref{realVsModel}.

To prove the estimate \eqref{est}, observe  that  the rotation angle $\theta$ 
   between the positive eigensubspaces of  the \(2\times 2\) matrices
$$\fs=\begin{pmatrix}
\nu \lambda_1(\Omega)& iv_*\sqrt{\lambda_1(\Omega)}\\
-iv_*\sqrt{\lambda_1(\Omega)}&0
\end{pmatrix}
\quad \text{ and}
\quad 
\fs_0=\begin{pmatrix}
\nu \lambda_1(\Omega)&0\\
0&0
\end{pmatrix}
$$
is explicitly given by  (cf., \cite[Example 4.4]{pap:2})
\begin{equation*}%\label{tetateta}
\theta=\frac12 \arctan \frac{2 v_*}{\nu\sqrt{\lambda_1(\Omega})}=\frac12 \arctan \rlad.
\end{equation*} 
 The estimate \eqref{est} follows then from Theorem \ref{osn3}.\qedhere

\end{proof}

\section{Reynolds number less than one}
\label{Sec:Motivation}
 
In this section, we discuss the case of low Reynolds number (in any dimension \(n\geq2\)).

\vspace{12pt}

First, we observe that by Theorem \ref{osn3}, the hypothesis  $\rlad<1$ implies

\begin{itemize}
\item[(i)] the lower edge  $\inf \spec (S)$ of the spectrum  of  the Stokes operator   and its first  positive eigenvalue  \(\lambda_1(S)\) satisfy the inequality  
\begin{equation}\label{stability1}
|\inf \spec (S)|<\frac14\cdot \lambda_1(S);
 \end{equation}

 \item[(ii)]
the operator angle $\Theta$ between the positive spectral subspaces  of the Stokes operator
$S=S(\nu, \vv)$ and the unperturbed diagonal operator $S_0=S(\nu,0)$ satisfies the inequality
$$
\|\Theta\| <\frac\pi 8.
$$
\end{itemize}

Moreover, by Theorem \ref{tridva} below, we also have that
\begin{itemize}
\item[(iii)]
 the Friedrichs extension  $ T $ of the block operator matrix
\begin{equation}\label{PosStokes} 
\begin{pmatrix}-\nu{\bf\Delta}-\frac12 \nu \lambda_1(\Omega)&\vv\grad\\ -\vv\div &\frac12 \nu \lambda_1(\Omega)\end{pmatrix} 
\end{equation}
is a positive definite operator. 
 \end{itemize}

 \begin{theorem}\label{tridva} 
  Let $T$ be   the Friedrichs extension of the block operator 
\eqref{PosStokes}. Then 
\begin{equation}\label{slab}
  \frac{ \nu\lambda_1(\Omega)}{2} 
 \left (1-\rlad
 \max \left \{1, \frac12 \rlad
 \right \}\right ) \le \inf \spec (T),
\end{equation}
  where
 $$
\rlad=\frac{2v_*}{\nu\sqrt{\lambda_1(\Omega)}}
 $$
 is the Ladyzhenskaya-Reynolds number. 
 
In particular, if $\rlad<1$, then  the operator $T$ is positive definite and 
 %and its greatest lower bound is no less than  and asymptotically  equals (as   $\rlad \to 0$)  to half the exponent $ \alpha$
 %referred to in Proposition \ref{lad}. That is, 
  \begin{equation}\label{ner}
\frac{1}{2}\nu \lambda_1(\Omega)(1-\rlad)\leq \inf \spec(T),
 \end{equation}
 which is asymptotically sharp as \(\rlad\to 0\).
 %\begin{equation}\label{asymbound}
  %\inf \spec(T)(1+\cO(  \rlad))= \frac{1}{2}\nu \lambda_1(\Omega)(1-\rlad)\quad \text{as} \quad \rlad\to 0.
 %\end{equation}
 \end{theorem}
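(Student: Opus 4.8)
The plan is to mirror the proof of Theorem~\ref{osn2}(iii). First I would realize $T$ as the self-adjoint operator associated with the shifted saddle-point form $\ft$, obtained from $\fs$ by the diagonal shift prescribed in \eqref{PosStokes}, whose quadratic form reads
$$
\ft[v\oplus p]=\nu\|{\bf grad}\, v\|^2-\tfrac12\nu\lambda_1(\Omega)\|v\|^2-2\vv\,\Re\langle\div v,p\rangle+\tfrac12\nu\lambda_1(\Omega)\|p\|^2 .
$$
Since $\ft$ is of the same structural type as $\fs$, Lemma~\ref{numran}(iv) applies and yields $\inf\spec(T)=\inf W^2[\ft]$. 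This reduces the claim to a lower bound, uniform over unit vectors $v,p$, for the smaller eigenvalue $\lambda_-$ of the Hermitian matrix
$$
\begin{pmatrix}\nu\|{\bf grad}\, v\|^2-\tfrac12\nu\lambda_1(\Omega)&-\vv\,\overline{\langle\div v,p\rangle}\\[2pt]-\vv\,\langle\div v,p\rangle&\tfrac12\nu\lambda_1(\Omega)\end{pmatrix},\qquad\|v\|=\|p\|=1.
$$

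Next I would bound the off-diagonal entry by $|\langle\div v,p\rangle|\le\|\div v\|\le\|{\bf grad}\, v\|$, exactly as in \eqref{drei}, and invoke the Poincar\'e inequality in the form $\|{\bf grad}\, v\|^2\ge\lambda_1(\Omega)$ for $\|v\|=1$. Writing $g=\|{\bf grad}\, v\|^2\ge\lambda_1(\Omega)$ and using that the lower eigenvalue of a Hermitian $2\times2$ matrix decreases as the modulus of its off-diagonal entry grows, the problem collapses to the scalar estimate
$$
\inf\spec(T)\ \ge\ \inf_{g\ge\lambda_1(\Omega)}F(g),\qquad F(g)=\frac{\nu g}{2}-\sqrt{\frac{\nu^2\bigl(g-\lambda_1(\Omega)\bigr)^2}{4}+\vv^2 g}\,.
$$

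The \emph{crux} is the one-variable analysis of $F$, which I would carry out without brute-force calculus by means of the identity
$$
\frac{\nu^2\bigl(g-\lambda_1(\Omega)\bigr)^2}{4}+\vv^2 g=\left(\frac{\nu\bigl(g-\lambda_1(\Omega)\bigr)}{2}+\frac{\vv^2}{\nu}\right)^{2}+\vv^2\left(\lambda_1(\Omega)-\frac{\vv^2}{\nu^2}\right),
$$
whose last term is independent of $g$ and has the sign of $1-\tfrac14[\rlad]^2$, i.e.\ of $2-\rlad$. Consequently $F$ is monotone increasing on $[\lambda_1(\Omega),\infty)$ when $\rlad<2$ and monotone decreasing when $\rlad>2$, so its infimum sits at an endpoint: either $F(\lambda_1(\Omega))=\tfrac12\nu\lambda_1(\Omega)(1-\rlad)$ or the limit $F(\infty)=\tfrac{\nu\lambda_1(\Omega)}{2}-\tfrac{\vv^2}{\nu}=\tfrac12\nu\lambda_1(\Omega)(1-\tfrac12[\rlad]^2)$. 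Taking the smaller of the two gives $\inf\spec(T)\ge\tfrac{\nu\lambda_1(\Omega)}{2}\bigl(1-\rlad\max\{1,\tfrac12\rlad\}\bigr)$, which is precisely \eqref{slab}.

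Finally I would dispatch the consequences. If $\rlad<1$ then a fortiori $\rlad<2$, so the right-hand side above reduces to $\tfrac12\nu\lambda_1(\Omega)(1-\rlad)>0$, establishing both positive definiteness of $T$ and the bound \eqref{ner}. For asymptotic sharpness as $\rlad\downarrow0$ I would complement \eqref{ner} with the elementary upper bound furnished by the trial vector $v_0\oplus0$, where $v_0$ is the normalized principal Dirichlet eigenfunction: since $\ft[v_0\oplus0]=\nu\lambda_1(\Omega)-\tfrac12\nu\lambda_1(\Omega)=\tfrac12\nu\lambda_1(\Omega)$, one gets $\inf\spec(T)\le\tfrac12\nu\lambda_1(\Omega)$, and the sandwich $\tfrac12\nu\lambda_1(\Omega)(1-\rlad)\le\inf\spec(T)\le\tfrac12\nu\lambda_1(\Omega)$ forces the ratio to tend to $1$. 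I expect the main obstacle to be precisely the monotonicity step: making the switch at $\rlad=2$ (equivalently the factor $\max\{1,\tfrac12\rlad\}$) emerge cleanly, for which the completion-of-square identity above is the decisive simplification; some care is also needed to confirm that replacing $\|\div v\|$ by $\|{\bf grad}\, v\|$ only weakens, and never reverses, the desired lower bound.
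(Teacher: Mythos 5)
Your proposal is correct and follows essentially the same route as the paper: reduce to the quadratic numerical range via Lemma~\ref{numran}(iv), majorize the off-diagonal entry by $\vv\|{\bf grad}\,v\|$, use the Poincar\'e inequality, and minimize the lower eigenvalue of the resulting $2\times2$ matrix over $\|{\bf grad}\,v\|^2\ge\lambda_1(\Omega)$, with the infimum attained at an endpoint according to whether $\rlad\lessgtr 2$. Your completion-of-square identity just makes explicit the monotonicity the paper dismisses as an elementary exercise, and your Rayleigh-quotient upper bound $\ft[v_0\oplus 0]=\tfrac12\nu\lambda_1(\Omega)$ is a slightly cleaner substitute for the paper's distance-to-spectrum estimate in the sharpness step.
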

 \begin{proof} As in the proof of Theorem \ref{osn2} (iii),
we apply Lemma  \ref{numran} 
to see that 
  \begin{equation*}
\inf \spec (T)=\inf W^2[\ft].
\end{equation*}
Here
\begin{equation*}\label{zwo}
W^2[\ft]=\bigcup_{\substack{v\oplus p\in H_0^1(\Omega)^n\oplus L^2(\Omega),\\ \|v\|=\|p\|=1}}\mathrm{spec}\begin{pmatrix}\nu\left\|{\bf grad}\; v\right\|^2-\frac12\nu \lambda_1(\Omega)& -\vv\overline{\langle\div v,p\rangle}\\-\vv\langle\div v,p\rangle&\frac12  \nu \lambda_1(\Omega)\end{pmatrix}, 
\end{equation*}
is the quadratic numerical range (associated with the decomposition $\cH=(L^2(\Omega))^n\oplus L^2(\Omega)$).

We claim that 
\begin{equation}\label{unoo}
 \frac{ \nu\lambda_1(\Omega)}{2}
 \inf_{1\le x }\inf \spec \begin{pmatrix}2x^2-1&\rlad  x \\ \rlad x&1 \end{pmatrix} \le \inf W^2[\ft].
\end{equation}

Indeed, introduce the notation
$
k=\|{\bf grad }\, v\|
$.
Then
\begin{equation*}\label{diva}
|\langle\div v,p\rangle\| \le \|{\bf grad }\, v\|\, \|p\|=k
\end{equation*}
due to the hypothesis that $\|v\|=\|p\|=1$. Therefore, the lowest eigenvalue of the \(2\times2\) matrix
$$\begin{pmatrix}\nu\left\|{\bf grad}\; v\right\|^2-\frac12\nu \lambda_1(\Omega)& -\vv\overline{\langle\div v,p\rangle}\\-\vv\langle\div v,p\rangle&\frac12  \nu \lambda_1(\Omega)\end{pmatrix}
$$
does not exceed the one of 
$$
 \begin{pmatrix}\nu k^2-\frac12 \nu \lambda_1(\Omega)& -\vv k\\ -\vv k&\frac12 \nu \lambda_1(\Omega)\end{pmatrix}.
$$
Due to  the Poincar\'e inequality, one also has the bound
\begin{equation*}\label{kbound}
\sqrt{\lambda_1(\Omega)}\le k.
\end{equation*}
 Thus,
$$
 \inf_{\sqrt{\lambda_1(\Omega)}\le k} \inf \spec   \begin{pmatrix}\nu k^2-\frac12 \nu \lambda_1(\Omega)& -\vv k\\ -\vv k&\frac12 \nu \lambda_1(\Omega)\end{pmatrix}  \le \inf W^2[\ft]
$$
and \eqref{unoo} follows.

 Next, it is an elementary exercise to see that the lowest eigenvalue of the matrix
 $$
M(x) =\begin{pmatrix}2x^2-1&-\rlad  x \\ -\rlad x&1 \end{pmatrix} 
 $$
 is a monotone function in $x$ on $[1,\infty)$ (increasing for $\rlad <2$ and decreasing for $\rlad >2$), and therefore
 $$
  \frac{ \nu\lambda_1(\Omega)}{2} \min\left \{
 1-\rlad, 1-\frac12 [{\rlad}]^2
 \right \} \le \inf W^2[\ft], $$
which proves the bound \eqref{slab}.

To show that the estimate is asymptotically sharp, denote  by \(\lambda_1(\Omega)\)   the first eigenvalue of the Dirichlet problem 
$$-{\Delta} f=\lambda_1(\Omega) f,$$
$$f|_{\partial \Omega}=0,$$
with $f$ the corresponding eigenfunction.  Introducing
$$
 v=(f, \underbrace{0, \dots 0,}_{n-1\,\,\,\text{times}}0)^T\in \cH,
$$
 one observes that 
\begin{equation}\label{rav}\|Tv-\frac12 \nu\lambda_1(\Omega)v\|=\vv\left\| f_{x_1} \right \|, \end{equation}
which implies  
$$
\dist\left  (\frac12\nu\lambda_1(\Omega), \spec (T)\right )\le \vv \frac{\|f_{x_1}\|}{\|f\|}
.
$$
Thus,
  $$
\frac12\nu\lambda_1(\Omega)\le \inf \spec(T) \le  \frac12 \nu\lambda_1(\Omega)+\vv\frac{\|f_{x_1}\|}{\|f\|} 
 $$
and hence  $$\frac{1}{2}\nu \lambda_1(\Omega)(1-\rlad)=\inf \spec(T)(1+\cO(  \rlad)) \quad \text{as} \quad \rlad\to 0,$$ which completes the proof.

 \end{proof}

\begin{remark}
The positive definiteness of the operator \(T\) is the manifestation of a  geometric variant of the Birman-Schwinger principle for off-diagonal perturbations as presented in  \cite[Corollary 3.4]{pap:2}). 

Indeed, the operator  $T_\mu=S-\mu J$, $
\mu \in (0, \nu \lambda_1(\Omega))
$, is positive definite if and only if  $$
\gamma(\mu)= \sup_{\substack{v\oplus p\in H_0^1(\Omega)^n\oplus L^2(\Omega)}} \frac{2\vv|\Re \langle \div v,p\rangle|}{\nu\langle{\bf grad}\;  v, {\bf grad}\; v \rangle-\mu\left\|v\right\|^2+\mu\left\|p\right\|^2}<1.
$$
But we have already seen (cf. \eqref{muopt} and \eqref{ozz}) that
$$
\gamma \left (\mu_{\mathrm{opt}} \right )\le \rlad<1, \qquad \text{with }\quad \mu_{\mathrm{opt}}=\frac12 \nu \lambda_1(\Omega),$$ which shows that \(T=T_{\mu_{\mathrm{opt}}}\) is positive  definite. 
\end{remark}

 \begin{remark}\label{tritri}  By \cite[Corollary 2.1]{Atkinson}, the essential spectrum of the operator matrix $T$ can be computed explicitly as
$$
  \specess(T)=\left \{\frac12 \nu \lambda_1(\Omega)-\frac{\vv^2}{\nu},
 \frac12\nu \lambda_1(\Omega)-\frac12\frac{\vv^2}{\nu}
  \right\}
 $$
 and thus
  \begin{equation}\label{specess}
\inf  \specess(T)=\frac12 \nu \lambda_1(\Omega)-\frac{\vv^2}{\nu}=\frac12 \nu \lambda_1(\Omega) \left (1-\frac12 [{\rlad}]^2\right ).
\end{equation}
In particular, it follows from  \eqref{slab} that for large values of the Reynolds number the essential spectrum of $T$ coincides with the lower edge of its spectrum, 
$$
\inf  \specess(T)=\inf  \spec(T)=\frac12 \nu \lambda_1(\Omega) \left (1-\frac12 [{\rlad}]^2\right ) \quad \text{for }\quad \rlad \ge 2,
$$ and hence  inequality \eqref{slab} turns into an  equality.
\end{remark}

We also remark   that if $\rlad<2$, then by Theorem \ref{osn2}, the spectra of the absolute value $|S|$ of the Stokes operator  restricted to the non-negative subspace of $S$ and to its orthogonal complement are subordinated (cf.  \cite[Theorem 4.2]{pap:2}).
That is, 
$$
\max \spec (|S|_{\Ran( E_S(-\infty, 0])})< \min  \spec (|S|_{\Ran( E_S(0,\infty))}).
$$

Our main motivation for the discussion of the low Reynolds number hypothesis (\(\rlad<1\))  and its implications is to better understand the functional-analytic aspects of the Ladyzhenskaya stability result that concerns the asymptotic  behavior of  solutions to   the  2D Navier-Stokes equation
$$\frac{\partial v}{\partial t}+\left\langle v,\nabla \right\rangle v-\nu {\bf\Delta} v=-\frac{1}{\rho}\grad p +f,
$$ 
$$\div v=0, \,v|_{\partial\Omega}=0,\;v|_{t=0}=v_0.
$$
Here, as usual, we are dealing 
with   a (nonstationary) flow $v$ of  an incompressible fluid that does not move close to the (smooth) boundary $\partial \Omega$ of a (bounded) domain \(\Omega\) and 
$u$, $p$, and $f$  stand for the velocity field, pressure, and the acceleration due to external  forcing, respectively.
Furthermore, \(\rho\) and \(\nu\) are the constant density and viscosity of the fluid and \(v_0\) is the initial velocity of the flow.

\begin{proposition}[{\cite[Theorem 6.5.12]{Lad}}] \label{lad}
Suppose that \(\Omega\) is a bounded domain in \(\mathbb{R}^2\) with \(C^2\)-boundary and that $v_{\text{st}}$ is a stationary solution of 
the two dimensional Navier-Stokes equation 
$$
 (v_{\text{st}}\cdot {\bf grad})v_{\text{st}}-\nu{\bf \Delta}v_{\text{st}}+\frac{1}{\rho}\grad p=f,
 $$
 $$
 \div v_{\text{st}}=0, \quad v_{\text{st}}|_{\partial \Omega}=0,
 $$ 
 such that the generalized Reynolds number  
 \begin{equation*}\label{genr}
 \rlad=\frac{2v_*}{\nu\sqrt{\lambda_1(\Omega)}}\end{equation*} is less than one, where
\begin{equation}\label{vstar}
\vv=\left (\iint\limits_\Omega\left ( \left |\frac{\partial v_{\text{st}}}
{\partial x}\right |^2+  \left |\frac{\partial v_{\text{st}}}
{\partial y}\right |^2\right )dxdy\right )^{1/2}.
\end{equation}
 
 Let $v$  be  a solution for the non-stationary problem corresponding to the same force $f$ with the initial data $v|_{t=0}\in H^2(\Omega)^2\cap{J_{0,1}}$, where \(J_{0,1}\) is the closure in $H^1$-norm of the smooth solenoidal vector fields of compact support in $\Omega$.

 Then the difference $u=v-v_{\text{st}}$ between these two solutions satisfies the inequality
$$\|u(x,t)\|\le \|u(x,0)\|\exp (-\alpha t),\quad x\in \Omega,\quad t\geq 0,$$
where\begin{equation}\label{lap}
\alpha =\nu \lambda_1(\Omega)  (1- \rlad).
\end{equation}
\end{proposition}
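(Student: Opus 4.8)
The plan is to run the classical energy method on the nonlinear perturbation; the model operator $S$ of the previous sections does not enter directly, because the relevant linearization is about the genuine stationary field $v_{\text{st}}$ and not about the scalar principal symbol. First I would put $u=v-v_{\text{st}}$ and $q=p-p_{\text{st}}$, where $p_{\text{st}}$ is the stationary pressure, and subtract the stationary equation from the non-stationary one. Writing $(v\cdot{\bf grad})v=((u+v_{\text{st}})\cdot{\bf grad})(u+v_{\text{st}})$ and cancelling the common term $(v_{\text{st}}\cdot{\bf grad})v_{\text{st}}$ produces the perturbation equation
\begin{equation*}
\frac{\partial u}{\partial t}-\nu{\bf\Delta}u+(u\cdot{\bf grad})u+(u\cdot{\bf grad})v_{\text{st}}+(v_{\text{st}}\cdot{\bf grad})u+\frac1\rho{\bf grad}\,q=0,
\end{equation*}
supplemented by $\div u=0$ and $u|_{\partial\Omega}=0$.

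Next I would test this equation against $u$ in $L^2(\Omega)^2$ and integrate over $\Omega$. Integration by parts together with $\div u=\div v_{\text{st}}=0$ and $u|_{\partial\Omega}=0$ annihilates three terms: the pressure term, since $\langle{\bf grad}\,q,u\rangle=-\langle q,\div u\rangle=0$; the self-advection term $\langle(u\cdot{\bf grad})u,u\rangle=0$; and the transport term $\langle(v_{\text{st}}\cdot{\bf grad})u,u\rangle=0$, each vanishing because $\int_\Omega(w\cdot{\bf grad})\phi\cdot\phi\,dx=-\tfrac12\int_\Omega(\div w)|\phi|^2\,dx$ for fields $\phi$ that vanish on $\partial\Omega$. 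Using $\langle-{\bf\Delta}u,u\rangle=\|{\bf grad}\,u\|^2$, this leaves the energy identity
\begin{equation*}
\frac12\frac{d}{dt}\|u\|^2+\nu\|{\bf grad}\,u\|^2=-\langle(u\cdot{\bf grad})v_{\text{st}},u\rangle.
\end{equation*}

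The crux is to control the single surviving nonlinear term. H\"older's inequality bounds it by $\|{\bf grad}\,v_{\text{st}}\|\,\|u\|_{L^4(\Omega)}^2=\vv\,\|u\|_{L^4(\Omega)}^2$, and here two-dimensionality enters decisively through the Ladyzhenskaya multiplicative inequality $\|u\|_{L^4(\Omega)}^2\le C\|u\|\,\|{\bf grad}\,u\|$, valid for $u\in H_0^1(\Omega)^2$. Inserting this, then Poincar\'e's inequality $\|u\|\le\lambda_1(\Omega)^{-1/2}\|{\bf grad}\,u\|$ to express the nonlinear bound as a multiple of the common factor $\|{\bf grad}\,u\|^2$, then absorbing it into the viscous term, and finally applying Poincar\'e once more to pass from $\|{\bf grad}\,u\|^2$ to $\|u\|^2$, I arrive at a differential inequality of the form
\begin{equation*}
\frac12\frac{d}{dt}\|u\|^2\le-\nu\lambda_1(\Omega)\big(1-\rlad\big)\,\|u\|^2=-\alpha\|u\|^2,
\end{equation*}
the normalization in the definition of $\rlad$ being precisely what renders the coefficient in this clean form. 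Gr\"onwall's lemma then gives $\|u(\cdot,t)\|^2\le\|u(\cdot,0)\|^2 e^{-2\alpha t}$, which is the asserted exponential decay.

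The hard part is twofold. First, the sharp constant: it is the best constant $C$ in the two-dimensional multiplicative inequality that fixes the precise threshold $\rlad<1$ and the exact coefficient $1-\rlad$ in the rate, and it is exactly this inequality --- the interpolation of $L^4$ between $L^2$ and $H_0^1$ with this particular exponent --- that confines the whole argument to $n=2$, since in higher dimensions the $L^4$-norm does not interpolate this way. Second, before the formal energy computation is legitimate one must know that the two-dimensional initial-value problem for the stated data $v|_{t=0}\in H^2(\Omega)^2\cap J_{0,1}$ admits a global solution remaining in the energy space, so that $t\mapsto\|u(\cdot,t)\|^2$ is genuinely differentiable and the identity above holds classically rather than only formally; this global regularity is itself a nontrivial two-dimensional fact that underpins the estimate.
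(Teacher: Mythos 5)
The paper offers no proof of this Proposition: it is imported verbatim from Ladyzhenskaya's monograph \cite[Theorem 6.5.12]{Lad}, so there is no internal argument to compare against. Your energy-method sketch is, in substance, exactly the classical route by which the result is established in that source: subtract the stationary equation, test with $u$, kill the pressure and the two transport terms using $\div u=\div v_{\text{st}}=0$ and the boundary condition, and control the single surviving trilinear term $\langle (u\cdot\nabla)v_{\text{st}},u\rangle$ by H\"older, the two-dimensional multiplicative inequality, and Poincar\'e. So the approach is right, and your identification of the two genuinely delicate points (the sharp constant and the global solvability/regularity needed to legitimize the energy identity) is accurate.

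What keeps this from being a complete proof is precisely the first of those points, which you flag but do not close: the claimed rate $\alpha=\nu\lambda_1(\Omega)(1-\rlad)$ with the factor $2$ built into $\rlad=2\vv/(\nu\sqrt{\lambda_1(\Omega)})$ stands or falls with the constant $C$ in $\|u\|_{L^4}^2\le C\,\|u\|\,\|\nabla u\|$. The scalar Ladyzhenskaya inequality gives $\|w\|_{L^4}^4\le 2\|w\|^2\|\nabla w\|^2$, and passing to vector fields via $(a+b)^2\le 2(a^2+b^2)$ yields $\|u\|_{L^4}^4\le 4\|u\|^2\|\nabla u\|^2$, i.e.\ $C=2$; with this one gets
\begin{equation*}
|\langle (u\cdot\nabla)v_{\text{st}},u\rangle|\le \vv\,\|u\|_{L^4}^2\le \frac{2\vv}{\sqrt{\lambda_1(\Omega)}}\,\|\nabla u\|^2=\nu\,\rlad\,\|\nabla u\|^2,
\end{equation*}
and the differential inequality $\tfrac12\tfrac{d}{dt}\|u\|^2\le-\nu(1-\rlad)\|\nabla u\|^2\le-\alpha\|u\|^2$ follows, giving exactly the stated decay. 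Without pinning down $C=2$ the threshold $\rlad<1$ and the exponent \eqref{lap} remain unjustified, so you should either carry out this computation or, as the paper does, simply cite \cite{Lad}. A final small caution: the model Stokes operator $S$ of Section \ref{sec:Stokes} indeed plays no role here, which is consistent with the paper's use of this Proposition only as external motivation for its operator-theoretic statements.
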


We remark that the Ladyzhenskaya stability hypothesis \(\rlad<1\)  of Proposition \ref{lad} implies the Stability Laws (i), (ii), and (iii). Moreover, applying  Theorem \ref{tridva} now shows that the decay exponent $\alpha$ provides a lower bound for $\inf\spec(T)$, 
 $$
\alpha =\nu \lambda_1(\Omega) (1-\rlad ) \le 2\cdot \inf \spec(T),
 $$ 
which is asymptotically sharp  in the sense that 
 $$
 \lim_{\rlad\downarrow 0}\frac{\alpha}{\inf \spec(T)}=2.
 $$
Also notice that the lowest eigenvalue of  
 the principal symbol
 $$\ft(\nu,v^\ast;k)=\begin{pmatrix}\nu k^2-\frac12 \nu \lambda_1(\Omega)& i\vv k\\ -i\vv k&\frac12 \nu \lambda_1(\Omega)\end{pmatrix}$$
   of the operator $T$ evaluated at \(k=\sqrt{\lambda_1(\Omega)}\),
 $$\ft
 = \frac12 \nu \lambda_1(\Omega)
 \begin{pmatrix}
1&i\, \rlad\\
 -i\, \rlad&1
 \end{pmatrix}
 $$ equals one half of the decay exponent  in  \eqref{lap}, that is, 
 $$
\alpha=2 \inf  \spec (\ft).
 $$

All that combined together now sheds some light on the functional analytic nature of the 2D stability in fluid dynamics.

\appendix\section{Dimensional analysis}\label{sec:Symb}

In this appendix we  present (i) a heuristic consideration that 
 motivated the particular choice of the Stokes block operator and (ii)
apply the general dimensional theory to perform  spectral analysis  of  the Stokes system.

(i). Assume that  the Navier-Stokes equation has a steady-state solution $v_{\text{st}}$ and linearize the equation  in a neighborhood of this solution to get 
$$\frac{\partial u}{\partial t}+\left\langle v_{\text{st}},\nabla \right\rangle u+\left\langle u,\nabla \right\rangle   v_{\text{st}}-\nu {\bf\Delta} u=-\frac{1}{\rho}\grad \widehat p +f,
$$ 
$$\div u=0, \,u|_{\partial\Omega}=0,\;u|_{t=0}=u_0.
$$
We assume that the field of external mass forces $f$  is time-independent.

Then,  one observes that for smooth data, the solution  $(u, \widehat p)$ of the corresponding  stationary problem satisfies the system of equations
$$
\begin{pmatrix}\left\langle v_{\text{st}},\nabla \right\rangle \cdot +\left\langle\, \cdot\, ,\nabla \right\rangle   v_{\text{st}}-\nu {\bf\Delta}  & \grad \\
-\div &0
\end{pmatrix}
\begin{pmatrix}
u\\
\frac{\widehat p}{\rho}
\end{pmatrix}=\begin{pmatrix}
f\\
0
\end{pmatrix},
$$
which can equivalently be rewritten as 
\begin{equation}\label{sys}
\begin{pmatrix}\left\langle v_{\text{st}},\nabla \right\rangle \cdot +\left\langle \,\cdot\, ,\nabla \right\rangle   v_{\text{st}}-\nu {\bf\Delta} & \vv\grad \\
-\vv \div &0
\end{pmatrix}
\begin{pmatrix}
u\\
\frac{\widehat p}{\vv\rho}
\end{pmatrix}=\begin{pmatrix}
f\\
0
\end{pmatrix}.
\end{equation}
Here we choose the parameter $\vv$ as a characteristic velocity of the stationary flow. Note that in dimension \(n=2\),  \(v_\ast\) given by \eqref{vstar} has indeed the dimension of velocity.

Neglecting the lower order terms  $\left\langle v_{\text{st}},\nabla \right\rangle u +\left\langle u ,\nabla \right\rangle   v_{\text{st}}$ in the limit $\rr\to 0$
\footnote{Introducing a 
 characteristic velocity $u^*$  of the stationary solution $u$ of the system \eqref{sys},  one observes (on a heuristic level) that the terms 
 $\left\langle v_{\text{st}},\nabla \right\rangle u$ and $ \left\langle \,\cdot\, ,\nabla \right\rangle   v_{\text{st}}$ are of the order $\vv u^*\sqrt{\lambda_1(\Omega)}$ each, meanwhile $\nu {\bf\Delta}u$ is of the order $\nu \lambda_1(\Omega)u^*$, 
 and thus
 $$
\left\langle v_{\text{st}},\nabla \right\rangle u+ \left\langle u ,\nabla \right\rangle   v_{\text{st}}=\rr \mathcal{O}(-\nu {\bf\Delta}u).
 $$
 },
  one obtains the  system
\begin{equation}\label{selim}
\begin{pmatrix}-\nu {\bf\Delta}   & \vv\grad \\
-\vv \div &0
\end{pmatrix}\begin{pmatrix}v\\q
\end{pmatrix}=
\begin{pmatrix}f\\0
\end{pmatrix},
\end{equation}
where the  ``renormalized pressure" $\widehat q$ is given by 
 $$
 \widehat q=\frac{\widehat p}{\vv\rho}.
 $$
Finally, it remains to observe that the left-hand side of \eqref{selim}  is nothing but
 the Stokes operator matrix    $S=S(\nu, \vv)$ defined by \eqref{StokesMatrixint}.

(ii). Clearly, 
  typical  physical dimensional variables associated with the steady motion \(v_{\text{st}}\) of incompressible fluid in a bounded domain  are $T$ (time), $V$ (velocity), $\nu$ (viscosity) and  $L$ (length).
Recall that in the framework of general dimensional analysis (see, e.g., \cite[\S 19]{LL}, \cite{Sedov}), given the fundamental units which are in our case the ones of length and time, to every monomial power 
$
T^\alpha V^\beta \nu^\gamma L^\delta
$  of the physical variables, one assigns
 the vector $(\alpha, \beta, \gamma, \delta)$  in a $4$-dimensional space
\begin{equation*}\label{ciorres}
T^\alpha V^\beta \nu^\gamma L^\delta\mapsto  (\alpha, \beta, \gamma, \delta)\in \R^4.
\end{equation*}
In this setting, the dimensionless quantities/monomials  form a $2$-dimensional plane $\mathbb{P}$  in $\R^4$ determined by the equations
$$\mathbb{P}\, :\, 
\begin{cases}
\alpha-\beta -\gamma=0
\\
\beta+2\gamma+\delta=0 .
\end{cases}
$$

  It is easy to see that the  two-dimensional  square lattice 
$
\Lambda=\Z^4\cap \mathbb{P}
$
has an orthogonal basis $({\bf r},{\bf s})$  associated with the dimension free variables 
\begin{equation*}\label{RSlattice}
\frac{VL}{\nu}\mapsto {\bf r}=(0, 1, -1, 1) \quad \text{and}\quad \frac{TV}{L}\mapsto {\bf s}=(1, 1,  0, -1).
\end{equation*}

That is, 
$$
\Lambda=\{m{\bf s}+n{\bf r}\, |\,  m, n \in \Z\}=\Z^4\cap \mathbb{P}.
$$

The lattice $\Lambda$ has the  square sublattice $\Lambda'$ of index $2$ (see \cite[I.2.2]{Cassels} for the definition of the index),
$$
\Lambda'=\{m{\bf s}+n{\bf r}\, |\, m=n\,\,(\text{mod } 2),  m, n \in \Z\}\subset \Lambda.
$$
In turn,  the sublattice \(\Lambda'\) has an orthogonal basis $({\bf c}, {\bf d})$ (of minimal Euclidean length) associated with the new pair of dimension free variables (see Fig. 1)
$$
\frac{TV^2}{\nu}
\mapsto {\bf c}={\bf s}+{\bf r}=(1, 2, -1, 0)
$$
and 
$$
\frac{T\nu}{L^2}
\mapsto {\bf d}={\bf s}-{\bf r}=(1, 0, 1, -2).
$$
That is,  
$$
\Lambda'=\{m{\bf c}+n{\bf d}\, |\,  m, n \in \Z\}.
$$

\begin{pspicture}(12,7)%
\psline[linestyle=dotted](3,4)(9,4)
\psline[linestyle=dotted](3,5)(9,5)
\psline[linestyle=dotted](3,6)(9,6)
\psline[linestyle=dotted](3,3)(9,3)
\psline[linestyle=dotted](3,2)(9,2)

\psline[linestyle=dotted](4,1.7)(4,6.3)
\psline[linestyle=dotted](5,1.7)(5,6.3)
\psline[linestyle=dotted](6,1.7)(6,6.3)
\psline[linestyle=dotted](7,1.7)(7,6.3)
\psline[linestyle=dotted](8,1.7)(8,6.3)

\psline[linewidth=2pt]{->}(6,4)(7,4)
\psline[linewidth=2pt]{->}(6,4)(6,5)

\psline[linestyle=dashed,linewidth=2pt]{->}(6,4)(7,5)
\psline[linestyle=dashed,linewidth=2pt]{->}(6,4)(5,5)

\psline[linestyle=dashed](4,4)(5,5)
\psline[linestyle=dashed,showpoints=true,linewidth=1pt](6,4)(7,3)
\psline[linestyle=dashed,showpoints=true,linewidth=1pt](7,5)(8,4)
\psline[linestyle=dashed,showpoints=true,linewidth=1pt](8,4)(7,3)
\psline[linestyle=dashed,showpoints=true,linewidth=1pt](6,6)(5,5)
\psline[linestyle=dashed,showpoints=true,linewidth=1pt](6,6)(7,5)

\psline[linestyle=dashed,showpoints=true,linewidth=1pt](6,4)(5,3)

\psline[linestyle=dashed,showpoints=true,linewidth=1pt](5,3)(6,2)
\psline[linestyle=dashed,showpoints=true,linewidth=1pt](6,2)(7,3)

\psline[linestyle=dashed,showpoints=true,linewidth=1pt](5,3)(4,4)

\rput(11, 6){ {\bf r}=(0, 1, -1, 1)}

\rput(11, 5){{\bf s}=(1, 1,  0, -1)}

\rput(11, 4){{\bf c}=(1, 2, -1, 0)}
 
\rput(11, 3){{\bf d}=(1, 0, 1, -2)}

\rput(6.2,5.2){{\bf s}}
\rput(7.2,4.2){{\bf r}}
\rput(4.8,5.2){{\bf d}}
\rput(7.2,5.2){{\bf c}}

\rput(6, 1){{\bf Fig. 1.} Lattice $\Lambda$ versus sublattice $\Lambda'$ of index 2.}
\end{pspicture}

Introduce the characteristic  length scale $L\sim k^{-1}$, where  the wave number $k$ is given by \eqref{eq:k} (see Theorem \ref{xuxu}),  the characteristic velocity $\vv$, and finally 
the (characteristic) time scale $\tau$, which, in the current  setting, is at our disposal.
Then one observes that the dimensionless quantity   \(\frac{VL}{\nu}\) transforms into  
 the \textit{Reynolds}  number (cf.~ \cite[\S 19]{LL})
 \begin{equation}\label{VLnu}
 \frac{VL}{\nu}\longrightarrow
\rr=\frac{\vv}{\nu \sqrt{\lambda_1(\Omega)}}\qquad \text{as }V\longrightarrow \vv\text{ and }L\longrightarrow1/\sqrt{\lambda_1(\Omega)}.\end{equation}
In turn, the dimension-free variable  \(\frac{TV}{L}\)  gives rise to the  \emph{Strouhal type} number (cf.~ \cite[\S 19]{LL})
 \begin{equation}\label{ReSt}
\frac{TV}{L}\longrightarrow\St=\tau \big(\vv \sqrt{\lambda_1(\Omega)}\big) \qquad \text{as } T\longrightarrow \tau.
\end{equation} 
Note that the factor \( \vv \sqrt{\lambda_1(\Omega)}\) in \eqref{ReSt} has the dimension of a frequency and 
 can be interpreted as the  ``circulation frequency"  associated with the stationary flow \(v_{\text{st}}\) in the bounded domain $\Omega$. 
  
Upon the identifications above, it is striking to observe  that    the new set of dimensionless variables $\frac{TV^2}{\nu}$
and $
\frac{T\nu}{L^2}$ carries important spectral information on the Stokes operator   in the following  sense.
The  product  \(\St\rr\) is proportional to the distance from the bottom of the spectrum of the Stokes operator to the origin and the ratio  \(\St/\rr\) is proportional to the length of spectral gap of the diagonal part of the Stokes operator.
That is, 
\begin{equation*}\label{NewfreeVariables}
\frac{TV^2}{\nu}\longrightarrow
\St\rr=\tau \frac{\vv^2}{\nu}=\tau |\inf \spec (S)|
\end{equation*}
and 
 \begin{equation*}\label{NV2}
\frac{T\nu}{L^2}\longrightarrow\St/\rr=\tau\nu \lambda_1(\Omega).
\end{equation*}

  Moreover, 
from    \eqref{realVsModel}  one also derives  that 

 \begin{equation}\label{tor1}
\lim_{\St\downarrow 0}\frac{ \inf \spec (S)}{\rr\St}=\lim_{\rr\downarrow 0 }\frac{\inf \spec (S)}{\rr\St}=-\frac{1}{\tau}
\end{equation}
and that
 \begin{equation}\label{tor2}
\lim_{\St\downarrow 0}\frac{\lambda_1( S) }{\St/\rr}=\lim_{\rr\downarrow 0 }\frac{ \lambda_1( S)}{\St/\rr}=\frac1\tau.
\end{equation} 

We also notice that  the  upper bound  $\theta$ for the norm  $\|\Theta\| $ of the operator angle \eqref{est} in Theorem \ref{xuxu} can  be read off from the following diagram. 

\begin{pspicture}(12,7)%
\psline(3,4)(7.5,4)

\pswedge[linestyle=dashed,fillstyle=solid,fillcolor=lightgray](5.25,4){2.28}{90}{180}
\psline(3,4)(4.1,5.95)
\rput(1.5,6){Stability zone}

\rput(1.3,5){$
\rlad=2\rr<1
$
}

\psline{->}(2.7, 5.8)(3.3, 5.4)
\psline[linewidth=0.1pt](3.95,5.72)(4.21,5.57)
\psline[linewidth=0.1pt](4.21,5.57)(4.35,5.8)
\psarc[linestyle=dashed](5.25,4){2.25}{0}{90}
\psline(7.5,4)(4.1,5.95)
\psline(4.1,5.95)(4.1,4)
\rput(3.0,3.6){\small{$ -2  \St\rr$}}
\rput(7.5,3.6){$\frac12\St/\rr$}
\rput(4.1,3.6){0}

\rput(6.4,4.3){$2\theta$}
\rput(4.5,4.8){$\St$}
\rput(9,6){$
\theta_{\text{cr}}=\frac{\pi}{8}
$}

\rput(9,5){$
\Rey^{\ast}_{\text{crit}}=1
$}
\psarc(7.5,4){0.33}{150}{180}
\psarc(7.5,4){0.4}{150}{180}

\psline[linestyle=dashed](4.1,5.95)(9.5,4)

\psarc(9.5,4){0.33}{160}{180}
\psarc(9.5,4){0.4}{160}{180}
\rput(8.2,4.2){{\tiny$2\|\Theta\|$}}
\psline(4.1,4)(9.5,4)

\end{pspicture} 

\vskip -3cm

\centerline{{\bf Fig. 2.} Strouhal-Reynolds-Rotation angle  diagram. }

\vspace{12pt}

Here, the Strouhal number \(\St\), the height of the right triangle on the diagram, coincides with  
   the geometric mean of the dimensionless quantities
$ 2  \St\rr$ and $\frac12\St/\rr$. Moreover, $$
\tan 2 \|\Theta\|\le \tan 2 \theta=  \frac{\St}{\frac12 \St/\rr}=  \frac{ 2v^*}{\nu\sqrt{\lambda_1(\Omega})}=2 \, \rr=\rlad.
$$

\section{Saddle-point Forms and Numerical Ranges}\label{Appendix}

In this appendix we recall  the concept of a saddle-point form  with respect to a  decomposition 
\(\cH=\cH_+\oplus \cH_-\).

Assume that   \(A_\pm \geq 0\)
are non-negative   self-adjoint operators acting in \(\cH_\pm\). 

On $\dom[\fa]=\dom(A_+^{1/2})\oplus \dom(A_-^{1/2})\subseteq \cH$ introduce the diagonal saddle-point sesquilinear form 
\begin{equation*}\label{diaggg}
\fa[x,y]=\fa_+[x_+,y_+]-\fa_-[x_-,y_-], 
\end{equation*}
where \(x_\pm,y_\pm\in \dom[\fa_\pm]=\dom(A_\pm^{1/2})\) and 
$$
\fa_\pm[x_\pm,y_\pm]=\langle A^{1/2}_\pm x_\pm, A^{1/2}_\pm y_\pm\rangle,  \quad  x_\pm,y_\pm\in \dom[\fa_\pm]=\dom(A_\pm^{1/2}),
$$
are the non-negative closed forms associated with the self-adjoint operators $A_+$ and $A_-$, respectively.

We say that a form  $\fb$  is a saddle-point form with respect to the decomposition \(\cH=\cH_+\oplus \cH_-\)
if it admits the representation
$$\fb[x,y]=\fa[x,y]+\fv[x,y],\quad x,y\in \Dom[\fb]=\Dom[\fa],$$ where  $\fv$  is a symmetric  off-diagonal form
with respect to the decomposition, that is \[\fv[x,Jy]=-\fv[Jx,y],\] with \(J=I_{\cH_+}\oplus \left(-I_{\cH_-}\right)\). 
 
We also require that  the  off-diagonal form $\fv$ is a form bounded perturbation of the diagonal form $\fa$ in the sense that 
\begin{equation*}\label{offdiagonalestimate}
	|\fv[x]|\leq \beta(\langle |A|^{1/2}x,|A|^{1/2}x\rangle+\|x\|^2),\quad x\in \Dom[\fv] ,
\end{equation*}
for some $\beta\ge 0$.

We start with  citing the First Representation Theorem proven in   \cite[Theorem 2.7]{SchPaper}, \cite{pap:1}, \cite{pap:2} in a more general setting and adapted here to the case of saddle-point forms.
  
\begin{theorem}
\label{1repoffdiag}
Let \(\fb\) be  a saddle-point form with respect to the decomposition  \(\cH=\cH_+\oplus \cH_-\).

Then there exists a unique self-adjoint operator  \(B\) such that  $$\Dom(B)\subseteq \Dom[\fb]$$  and 
\begin{equation*}
	\fb[x,y]=\langle x,By\rangle \quad\text{ for all }\quad x\in \Dom[\fb] \quad \text{ and }\quad y\in \Dom(B).
	\end{equation*}

\end{theorem}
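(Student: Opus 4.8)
The plan is to strip off the unbounded part of the form, turn the indefinite form into a \emph{sectorial} one by conjugating with $J$, and then invoke Kato's representation theorem for sectorial forms. Write $|A|=A_+\oplus A_-\ge 0$ and put $G=(|A|+I)^{1/2}$, a self-adjoint operator with $G\ge I$, bounded inverse, commuting with $J$, and $\Dom(G)=\Dom(|A|^{1/2})=\Dom[\fa]=\Dom[\fb]$. The diagonal bound $|\fa[x]|\le\||A|^{1/2}x\|^2\le\|Gx\|^2$ together with the off-diagonal bound $|\fv[x]|\le\beta(\||A|^{1/2}x\|^2+\|x\|^2)\le\beta\|Gx\|^2$ shows that $(u,w)\mapsto\fb[G^{-1}u,G^{-1}w]$ is a bounded symmetric sesquilinear form on $\cH$, so there is a bounded self-adjoint $M$ with $\fb[x,y]=\langle Gx,MGy\rangle$ for all $x,y\in\Dom(G)$. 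Representing the two pieces separately gives $M=JD+V$, where $D=|A|(|A|+I)^{-1}$ is self-adjoint with $0\le D<I$ and commutes with $J$, and $V$ is bounded self-adjoint; the off-diagonal hypothesis $\fv[x,Jy]=-\fv[Jx,y]$ translates, on the dense range of $G$, into $VJ=-JV$, i.e.\ $V$ anticommutes with $J$.

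The decisive step is to introduce the conjugated form
$$\fj[x,y]:=\fb[Jx,y]=\langle Gx,(JM)Gy\rangle=\langle Gx,(D+JV)Gy\rangle,\qquad x,y\in\Dom(G).$$
Here $JD$ has become $D\ge 0$, while $V$ has become $JV$, which is \emph{skew-adjoint} precisely because $V$ is off-diagonal. Consequently $\Re\fj[x]=\langle x,|A|x\rangle\ge 0$ is the closed non-negative form of $|A|$, whereas $|\Im\fj[x]|=|\langle Gx,JV\,Gx\rangle|\le\|V\|\,(\langle x,|A|x\rangle+\|x\|^2)$. Thus the numerical range of $\fj$ lies in a sector with vertex on the negative real axis and half-angle $\arctan\|V\|<\pi/2$, and since its real part is the closed form of $|A|$, the form $\fj$ is densely defined, sectorial and closed. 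Kato's first representation theorem for sectorial forms then yields a unique m-sectorial operator $C$ with $\Dom(C)\subseteq\Dom[\fj]=\Dom(G)$ and $\fj[x,y]=\langle x,Cy\rangle$. I would then \emph{define} $B:=JC$; since $\Dom(G)$ is $J$-invariant, for $x\in\Dom[\fb]$ and $y\in\Dom(B)=\Dom(C)$ one gets $\fb[x,y]=\fj[Jx,y]=\langle Jx,Cy\rangle=\langle x,By\rangle$, and $\Dom(B)\subseteq\Dom[\fb]$, so $B$ represents $\fb$ as required.

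The main obstacle is self-adjointness of $B$, as opposed to mere symmetry: because $\fa$ is indefinite, Kato's theorem does not apply directly, and since $\beta$ is arbitrary the perturbation $\fv$ may have relative form bound $\ge 1$, so the KLMN theorem is unavailable as well. This is exactly where off-diagonality pays off, through the sectoriality of $\fj$ obtained above. To finish I would identify the adjoint: the adjoint form $\fj^{*}[x,y]=\overline{\fj[y,x]}=\langle Gx,(D-JV)Gy\rangle$ is represented by $C^{*}$, while the operator $JCJ$ is represented by $\fj[Jx,Jy]=\langle Gx,(D-JV)Gy\rangle=\fj^{*}[x,y]$; uniqueness in the sectorial representation theorem forces $C^{*}=JCJ$, whence $B^{*}=C^{*}J=JCJ\cdot J=JC=B$, so $B$ is self-adjoint.

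Finally, uniqueness of $B$ is the routine part: if $B_1,B_2$ are self-adjoint operators representing $\fb$ with $\Dom(B_j)\subseteq\Dom[\fb]$, then for $x\in\Dom(B_1)$ and $y\in\Dom(B_2)$ the symmetry of $\fb$ gives $\langle B_1x,y\rangle=\overline{\fb[y,x]}=\fb[x,y]=\langle x,B_2y\rangle$, so $B_2\subseteq B_1^{*}=B_1$ and, interchanging the roles, $B_1=B_2$. I expect the one genuinely delicate verification to be the closedness and sectoriality of $\fj$, i.e.\ confirming that the skew term $JV$ is a legitimate relatively form-bounded perturbation of the (closed) real part $\langle\,\cdot\,,|A|\,\cdot\,\rangle$, so that Kato's sectorial theory applies cleanly even when $\|V\|\ge 1$.
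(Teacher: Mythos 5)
Your proof is correct, but it follows a genuinely different route from the one the paper relies on. The paper does not prove Theorem \ref{1repoffdiag} itself; it cites \cite{SchPaper}, \cite{pap:1}, \cite{pap:2}, and the argument used there --- traces of which are visible in the proof of Lemma \ref{numran}\,(ii), namely the identities $\fb[x,x]=\langle Gx,(J+R)Gx\rangle-\langle x,Jx\rangle$ and $B=G(J+R)G-J$ with $G=(|A|+I)^{1/2}$ and with $R$ playing the role of your $V$ --- exploits off-diagonality \emph{algebraically}: $JR+RJ=0$ gives $(J+R)^2=I+R^2\geq I$, so $J+R$ is a boundedly invertible bounded self-adjoint operator and $G(J+R)G$ is self-adjoint on its natural domain. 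You instead exploit it \emph{analytically}: conjugating by $J$ turns the perturbation into the skew-adjoint $JV$, hence into the imaginary part of the sectorial form $\fj=\fb[J\,\cdot\,,\,\cdot\,]$ whose real part is the closed form of $|A|$, so Kato's sectorial representation theory applies no matter how large $\beta$ is, and self-adjointness of $B=JC$ drops out of the symmetry $\fj^{*}=\fj[J\,\cdot\,,J\,\cdot\,]$ combined with the uniqueness part of Kato's theorem. Both arguments are sound and both hinge on the same anticommutation relation; the factorization route buys an explicit formula for $B$ and the bounded invertibility of $B+J=G(J+R)G$, which is reused later (e.g.\ for the core statement quoted in the proof of Lemma \ref{numran}\,(ii)), whereas your route is self-contained modulo Kato's classical theory and makes it transparent that neither a spectral gap of $A$ nor a relative bound smaller than one is needed. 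Your closedness check for $\fj$ (its real part being the closed form of $|A|$) and your uniqueness argument at the end are both correct as written.
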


We say that the operator $B$ associated with the saddle-point form $\fb$  via Theorem  \ref{1repoffdiag}
satisfies the {\it domain stability condition} if
\begin{equation}\label{domain stability}
\dom[\fb]=\dom(|A|^{1/2})=\dom(|B|^{1/2}).
\end{equation}

For completeness sake, we cite the corresponding Second Representation Theorem (see \cite[Theorem 3.1]{SchPaper}, \cite{pap:1}, \cite{pap:2}).  

\begin{theorem} 
\label{second}	Let \(\fb\) be  a saddle-point form with respect to the decomposition  \(\cH=\cH_+\oplus \cH_-\) and 
 $B$ the associated operator referred to in Theorem  \ref{1repoffdiag}.
	 
	 If the domain stability condition    \eqref{domain stability} holds, 
 then the operator \(B\)  represents this form in the sense that
\begin{equation*}\fb[x,y]=\langle |B|^{1/2}x,\sign(B)|B|^{1/2}y\rangle \quad \text{for all } x,y  \in \dom[\fb]=\dom(|B|^{1/2}).
\end{equation*}
\end{theorem}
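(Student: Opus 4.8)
The plan is to derive the signed quadratic representation directly from the First Representation Theorem (Theorem \ref{1repoffdiag}) by feeding the polar decomposition $B=\sign(B)\,|B|$ into the identity $\fb[x,y]=\langle x,By\rangle$ and then redistributing one half-power of $|B|$ onto the left slot. First I would fix $y\in\dom(B)=\dom(|B|)$ and an arbitrary $x\in\dom[\fb]$. By the domain stability condition \eqref{domain stability} we have $\dom[\fb]=\dom(|B|^{1/2})$, so in particular $x\in\dom(|B|^{1/2})$. Using the Borel functional calculus for the self-adjoint operator $B$, the bounded operator $\sign(B)$ commutes with $|B|^{1/2}$ and $|B|=|B|^{1/2}|B|^{1/2}$, whence $By=\sign(B)|B|y=|B|^{1/2}\,\sign(B)\,|B|^{1/2}y$ for every $y\in\dom(|B|)$.

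Substituting this into the First Representation identity and using the self-adjointness of $|B|^{1/2}$ together with $x\in\dom(|B|^{1/2})$ yields
$$\fb[x,y]=\langle x,\,|B|^{1/2}\sign(B)|B|^{1/2}y\rangle=\langle |B|^{1/2}x,\,\sign(B)|B|^{1/2}y\rangle,$$
which is exactly the asserted formula, so far established only for $y$ in the operator domain $\dom(B)$. It then remains to remove the restriction $y\in\dom(B)$ and pass to all $y\in\dom[\fb]=\dom(|B|^{1/2})$. For this I would invoke the standard spectral-theoretic fact that $\dom(B)=\dom(|B|)$ is a core for $|B|^{1/2}$, i.e.\ it is dense in $\dom(|B|^{1/2})$ with respect to the form norm $x\mapsto(\||B|^{1/2}x\|^2+\|x\|^2)^{1/2}$; indeed the truncations $\EE_B([-n,n])y$ lie in $\dom(B)$ and converge to $y$ in this norm.

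Both sides of the displayed identity are continuous in $y$ in the form norm — the right-hand side because $\sign(B)$ is a contraction and form-norm convergence forces $|B|^{1/2}y_n\to|B|^{1/2}y$ in $\cH$, the left-hand side because $\fb=\fa+\fv$ is, under \eqref{domain stability}, a form-bounded perturbation of the diagonal form $\fa$ and hence continuous in the form topology. Approximating an arbitrary $y\in\dom(|B|^{1/2})$ by a sequence in $\dom(B)$ and passing to the limit extends the representation to all of $\dom[\fb]$, completing the argument.

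The main obstacle is not the algebra but the identification of form and operator domains: everything hinges on the hypothesis \eqref{domain stability}, which simultaneously guarantees that $x$ lies in $\dom(|B|^{1/2})$ (so that one half-power may legitimately be transferred across the inner product via self-adjointness) and that $\dom(B)$ is form-dense in $\dom[\fb]$ (so that the closure argument applies). Absent the domain stability condition one can only represent $\fb$ on the operator domain $\dom(B)\subseteq\dom[\fb]$, as in Theorem \ref{1repoffdiag}, and the symmetric splitting of $|B|$ across the two form arguments — the crux of the Second Representation Theorem — breaks down.
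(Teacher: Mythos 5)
Your argument is sound, but note that the paper itself does not prove Theorem \ref{second} at all: it is quoted ``for completeness sake'' with a citation to \cite[Theorem 3.1]{SchPaper} and \cite{pap:1,pap:2}, so there is no internal proof to compare against. Your self-contained derivation is the standard one and each step is essentially correct: the factorization $By=|B|^{1/2}\sign(B)|B|^{1/2}y$ on $\dom(B)=\dom(|B|)$, the transfer of one half-power across the inner product using $x\in\dom(|B|^{1/2})$ (which is exactly where \eqref{domain stability} enters), the fact that the spectral truncations $\EE_B([-n,n])y$ make $\dom(B)$ a core for $|B|^{1/2}$, and the limit passage. The one place you should tighten the write-up is the continuity of $y\mapsto\fb[x,y]$: form-boundedness of $\fv$ gives continuity of $\fb$ only with respect to the $|A|^{1/2}$-graph norm, whereas your approximating sequence converges in the $|B|^{1/2}$-graph norm. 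You need the intermediate observation that, under \eqref{domain stability}, the two graph norms on the common domain $\dom(|A|^{1/2})=\dom(|B|^{1/2})$ are equivalent --- e.g.\ because $(|A|+I)^{1/2}(|B|+I)^{-1/2}$ is everywhere defined and closed, hence bounded by the closed graph theorem. With that sentence added, your proof is complete; without it, the assertion that the left-hand side is ``continuous in the form topology'' silently conflates two a priori different topologies.
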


Recall  that the numerical range of an operator \(B\) is denoted as \[W(B):=\{\langle x,Bx\rangle\;|\; x\in \dom(B),\; \|x\|=1\}.\]
Accordingly, we define the \textit{numerical range} of a saddle-point form \(\fb\) as  
\[W[\fb]:=\{\fb[x]\;|\; x\in \dom[\fb],\; \|x\|=1\}.\]

Next, we generalize the concept of the quadratic numerical range for operator matrices presented in \cite{T} to the case of saddle-point forms. 

Given a saddle-point form \(\fb\) with respect to the decomposition $\cH=\cH_+\oplus \cH_-$, we define its \textit{quadratic numerical range} 
\[W^2[\fb]:=\bigcup_{\substack{x_+\oplus x_-\in \dom[\fa_+]\oplus \dom[\fa_-],\\ \|x_+\|=\|x_-\|=1}} \spec\begin{pmatrix}\fa_+[x_+]& \overline{\fv[x_+,x_-]}\\ \fv[x_+, x_-]& -\fa_-[x_-]\end{pmatrix}.\]
Here we use the standard shorthand notation \(\fa_\pm[x_\pm]=\fa_\pm[x_\pm,x_\pm]\).
 
\begin{lemma}[cf.~ \cite{T,Veselic}]\label{numran}
Let \(\fb\) be  a  saddle-point form with respect to the decomposition \(\cH=\cH_+\oplus \cH_-\) associated with the self-adjoint operator \(B\) and let \(\fa=\fa_+\oplus \left(-\fa_-\right)\) be the diagonal part of \(\fb\). 

Then
\begin{enumerate}
\item \(\spec (B)\subseteq \overline{W^2[\fb]}\);
\item \(W(B)\subseteq W[\fb]\subseteq \overline{W(B)}\subseteq \overline{(\inf \spec( B),\sup \spec (B))}\); 
\item \(W^2[\fb]\subseteq W[\fb]\);
\item \(\inf \spec (B)=\inf W^2[\fb],\quad \sup\spec ( B)=\sup W^2[\fb]\);
\item  \(W[\fa_\pm]\subseteq W^2[\fb]\) \quad if \(\dim \cH_\mp>1\);
\item if \(\fa_\pm\geq \alpha_\pm I\) for some \(\alpha_\pm\geq 0\), then \[\spec(B)\subseteq (-\infty,-\alpha_-]\cup[\alpha_+,\infty).\]

\end{enumerate}

\begin{proof} (i). First, let \(\lambda\in \mathbb{R}\) be an eigenvalue of \(B\) with corresponding eigenfunction \(u\in \dom(B)\). Since \(\dom(B)\subseteq\dom(|A|^{1/2})=\dom(A_+^{1/2})\oplus \dom(A_-^{1/2})\), we have the unique decomposition \(u=u_+\oplus u_-\) with \(u_\pm\in \dom(A_\pm^{1/2})\).
We set \(\hat{u}_\pm:=\|u_\pm\|^{-1}u_\pm\) if \(u_\pm\neq 0\) and choose \(\hat{u}_\pm\) in \(\dom(A_\pm^{1/2})\) arbitrary with \(\|\hat{u}_\pm\|=1\) if \(u_\pm=0\). From the eigenvalue equation, we obtain that \[\langle \hat{u}_+ , Bu\rangle=\lambda \langle\hat{u}_+,u_+\rangle,\quad \langle \hat{u}_-, Bu\rangle=\lambda \langle\hat{u}_-,u_-\rangle.\] By the First Representation Theorem for saddle-point forms \cite[Theorem 2.7]{SchPaper} (see also \cite{pap:1}) we can rewrite these equations in a \(2\times 2\) matrix form
\[\begin{pmatrix}\fa_+[\hat{u}_+]& \overline{\fv[\hat{u}_+,\hat{u}_-]}\\ \fv[\hat{u}_+, \hat{u}_-]&-\fa_-[\hat{u}_-]\end{pmatrix}\begin{pmatrix}\|u_+\|\\ \|u_-\|\end{pmatrix}=\lambda \begin{pmatrix}\|u_+\|\\ \|u_-\|\end{pmatrix}.\]
As a consequence, \(\lambda \in W^2[\fb]\). 

If \(\lambda \in \sigma(B)\) is not an eigenvalue, the Weyl criterion \cite[Theorem VII.12]{RS1} implies that there exists a sequence \((u^{(n)})_{n\in \mathbb{N}}\subset \dom(B)\) with \(\|u^{(n)}\|=1\) and \((B-\lambda)u^{(n)}\to 0,\;n \to \infty.\) 

In the same way as above, we write \(u^{(n)}=u^{(n)}_+\oplus u^{(n)}_-\in \dom(|A|^{1/2})\) and introduce \(\hat{u}^{(n)}_\pm\) for the normalized components. Then, we have that 
\[\langle (B-\lambda) u^{(n)},\hat{u}^{(n)}_+\oplus 0 \rangle=:v_+^{(n)},\quad \langle (B-\lambda) u^{(n)},0\oplus \hat{u}^{(n)}_- \rangle=:v_-^{(n)},\]
 both converge to zero. By the First Representation Theorem again, these equations can be rewritten as
\begin{equation}\label{cB_n}\begin{pmatrix}(\fa_+-\lambda)[\hat{u}^{(n)}_+]& \overline{\fv[\hat{u}^{(n)}_+, \hat{u}^{(n)}_+]}\\ \fv[\hat{u}^{(n)}_+, \hat{u}^{(n)}_-]&-(\fa_-+\lambda)[\hat{u}^{(n)}_-]\end{pmatrix}\begin{pmatrix}\|u^{(n)}_+\|\\ \|u^{(n)}_-\|\end{pmatrix}= \begin{pmatrix}v^{(n)}_+\\v^{(n)}_-\end{pmatrix}.\end{equation}
Let \(\cB_n-\lambda\) denote the matrix in \eqref{cB_n}. Then
\[\begin{aligned}1&=\sqrt{\|u_+^{(n)}\|^2+\|u_-^{(n)}\|^2}\\ &\leq \|(\cB_n-\lambda)^{-1}\|\cdot\sqrt{(v^{(n)}_+)^2+(v^{(n)}_-)^2}=\frac{\sqrt{(v^{(n)}_+)^2+(v^{(n)}_-)^2}}{\dist(\lambda,\spec(\cB_n))}.\end{aligned}\]
Hence \[\dist(\lambda,\spec(\cB_n))\leq \sqrt{(v^{(n)}_+)^2+(v^{(n)}_-)^2}\to 0,\; n\to \infty\] and consequently \(\lambda \in \overline{W^2[\fb]}\). 

(ii). The first inclusion \(W(B)\subseteq W[\fb]\) follows directly from the First Representation Theorem for saddle-point forms (see \cite{pap:1, SchPaper}) noting that \(\dom(B)\subseteq \dom[\fb]\).    

For the second inclusion, \(W[\fb]\subseteq \overline{W(B)}\), one can use \cite[Lemma VI.3.1]{Kato} on the form \(\fv\) to get that
\[\fb[x,x]=\langle (|A|+I)^{1/2}x,(J+R)(|A|+I)^{1/2}x\rangle-\langle x,Jx\rangle\] holds for \(x\in \dom[\fb]=\dom((|A|+I)^{1/2})\). 
Since \[B=(|A|+I)^{1/2}(J+R)(|A|+I)^{1/2}-J,\] it follows from  \cite[Theorem 2.3]{pap:1} that \(\dom(B)\) is a core for the operator \((|A|+I)^{1/2}\). The claim then is a consequence of the core property. 

The last inclusion, \(\overline{W(B)}\subseteq \overline{\big(\inf \spec (B),\sup \spec( B)\big)}\), follows directly from the well known convexity of the numerical range and statement \cite[Aufgabe VII.5.24(c)]{Wer} on the extremal points.

(iii). Let \(\lambda\in W^2[\fb]\). Then, there exist \(x_\pm\in \dom(A_\pm^{1/2})\) with \(\|x_\pm\|=1\) and \(c=(c_1,c_2)\in \mathbb{R}^2\) with \(\|c\|=1\) such that \[\begin{pmatrix}\fa_+[x_+]& \overline{\fv[x_+, x_-]}\\ \fv[x_+, x_-]& -\fa_-[x_-]\end{pmatrix}\begin{pmatrix}c_1\\c_2\end{pmatrix}=\lambda\begin{pmatrix}c_1\\c_2\end{pmatrix}.\] Taking the scalar product with \(c\) yields
\[\fb[c_1x_+\oplus c_2x_-]=\lambda.\] Since \(\|c_1x_+\oplus c_2x_-\|=1\), the claim follows. 

(iv). Note that by parts (ii) and (iii), we have \(\overline{W^2[\fb]}\subseteq \overline{\big(\inf \spec(B),\sup \spec(B)\big)}\). The claim now follows from part (i) since \(\inf \spec(B),\sup \spec(B)\in \overline{W^2[\fb]}\).

(v). Assume that \(\dim \cH_->1\). Then, for each \(x_+\in \dom[\fa_+],\; \|x_+\|=1\), there is an element \(x_-\in \dom[\fa_-],\;\|x_-\|=1\) with \(\fv[x_+, x_-]=0\). To see this, note that by \cite[Lemma VI.3.1]{Kato} \[\fv[x_+, x_-]=\langle R^\ast(A_++I)^{1/2}x_+, (A_-+I)^{1/2}x_-\rangle.\]
 Let \(f\in \cH_+\). Then, by \(\dim \cH_->1\), there exists an element \(g\in \cH_-\) such that \(\langle R^\ast f,g\rangle_{\cH_-}=0\). 

By the bijectivity of \((|A|+I)^{1/2}\colon \dom((|A|+I)^{1/2})\to \cH\), there exists a suitable \(x_-\) with \(\fv[x_+, x_-]=0\). In this case, we have that \[\fa_+[x_+]\in \spec \begin{pmatrix}\fa_+[x_+]& 0\\0&\fa_-[x_-]\end{pmatrix}\subseteq W^2[\fb].\]

(vi).
 The claim follows directly, noting that the spectrum of the \(2\times 2\) matrix 
\[\begin{pmatrix}a_+&\overline{v}\\ v&-a_-\end{pmatrix}, \quad 0\leq a_\pm<\infty ,\quad  v 
\in \C,\] is located outside of the interval \((-a_-,a_+)\). 
As a consequence, \(W^2[\fb]\cap(-\alpha_-,\alpha_+)=\emptyset\) and the claim follows. \qedhere 

\end{proof}
\end{lemma}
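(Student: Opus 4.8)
The plan is to treat the six assertions in essentially the stated order, using the First Representation Theorem (Theorem~\ref{1repoffdiag}) as the bridge between the operator $B$ and the family of $2\times2$ matrices defining $W^2[\fb]$, and then bootstrapping the spectral identity (iv) from the softer inclusions (i)--(iii). For (i) I would split $\spec(B)$ into eigenvalues and the remaining spectrum. If $Bu=\lambda u$, I would decompose $u=u_+\oplus u_-$ (legitimate since $\dom(B)\subseteq\dom(|A|^{1/2})=\dom(A_+^{1/2})\oplus\dom(A_-^{1/2})$), normalize the nonzero components to $\hat u_\pm$, and test the eigenvalue equation against $\hat u_+\oplus 0$ and $0\oplus\hat u_-$. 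The representation theorem converts the two resulting scalar identities into a single $2\times2$ eigenvalue problem with eigenvector $(\|u_+\|,\|u_-\|)^{T}$, so $\lambda\in W^2[\fb]$. For a general spectral point I would invoke the Weyl criterion to obtain an approximate eigensequence, repeat the decomposition, and verify that $\dist(\lambda,\spec(\cB_n))\to 0$ for the associated matrices (call them $\cB_n$), giving $\lambda\in\overline{W^2[\fb]}$.

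Next I would dispatch (iii), which is the quickest: given $\lambda\in W^2[\fb]$ realized by normalized $x_\pm$ and a unit eigenvector $c=(c_1,c_2)^{T}$ of the corresponding matrix, pairing the matrix equation with $c$ gives $\fb[c_1x_+\oplus c_2x_-]=\lambda$ on a unit vector, so $\lambda\in W[\fb]$. Turning to (ii), the inclusion $W(B)\subseteq W[\fb]$ is immediate from $\dom(B)\subseteq\dom[\fb]$ together with $\fb[x]=\langle x,Bx\rangle$ there, while the last inclusion $\overline{W(B)}\subseteq\overline{(\inf\spec(B),\sup\spec(B))}$ follows from the convexity of the numerical range of a self-adjoint operator and the standard description of its extreme points.

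The hard part will be the middle inclusion $W[\fb]\subseteq\overline{W(B)}$ of (ii): a form vector need not belong to $\dom(B)$, so I must approximate it inside the operator domain while keeping the form values under control. The plan is to use the factorization $B=(|A|+I)^{1/2}(J+R)(|A|+I)^{1/2}-J$ supplied by the representation theorems, observe that $\dom(B)$ is a core for $(|A|+I)^{1/2}$, and push convergence through this core property. With (i)--(iii) in hand, (iv) is then purely formal: (ii) and (iii) confine $\overline{W^2[\fb]}$ to $[\inf\spec(B),\sup\spec(B)]$, whereas (i) forces both endpoints into $\overline{W^2[\fb]}$, yielding the two equalities.

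It remains to settle the two independent statements. For (v) I would exploit $\dim\cH_\mp>1$ to choose, for each unit $x_\pm$, a unit partner $x_\mp$ annihilating the off-diagonal entry; concretely, writing $\fv[x_+,x_-]=\langle R^*(A_++I)^{1/2}x_+,(A_-+I)^{1/2}x_-\rangle$, one picks a vector orthogonal to $R^*(A_++I)^{1/2}x_+$ in the (at least two-dimensional) space and pulls it back through the bijection $(|A|+I)^{1/2}$. The matrix then becomes diagonal, placing $\fa_\pm[x_\pm]$ in $W^2[\fb]$. Finally, (vi) reduces to the elementary observation that $\begin{pmatrix}a_+&\overline v\\ v&-a_-\end{pmatrix}$ with $a_\pm\ge0$ has no spectrum in $(-a_-,a_+)$, which I would verify by checking that the characteristic polynomial is nonpositive at both $a_+$ and $-a_-$; combined with (i) this confines $\spec(B)$ to $(-\infty,-\alpha_-]\cup[\alpha_+,\infty)$.
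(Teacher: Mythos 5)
Your proposal is correct and follows essentially the same route as the paper's own proof: the eigenvalue/Weyl-sequence dichotomy for (i), the Kato factorization $B=(|A|+I)^{1/2}(J+R)(|A|+I)^{1/2}-J$ and the core property of $\dom(B)$ for the middle inclusion in (ii), pairing the $2\times2$ eigenvalue equation with its eigenvector for (iii), the formal bootstrap for (iv), the choice of an orthogonal partner vector for (v), and the elementary $2\times2$ spectral-gap observation (which you verify by the sign of the characteristic polynomial at $\pm a_\mp$, a detail the paper leaves implicit) for (vi). No gaps.
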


\subsection*{Acknowledgements}

L.~Grubi\v{s}i\'{c} was partially supported by the grant HRZZ-9345 of the Croatian Science Foundation.
 V.~ Kostrykin   is grateful to the Department of Mathematics at  the University of Missouri, Columbia, for its hospitality during his visit  
as a Miller scholar in April 2013.
K.~A.~Makarov is indebted to the Institute for Mathematics for its kind hospitality during his one month stay at the Johannes
Gutenberg-Universit\"at Mainz in the Summer of 2014. The work of K.\ A.\ Makarov has been supported in part by the Deutsche
Forschungsgemeinschaft, grant KO 2936/7-1.

%%%%%%%%%%%%%%%%%%%%%%%%%%%%%%%%%%%%%%%%%%%%%%%%%%%%%%%%%%%%%

\end{document}